\title{Genus zero surfaces with volume and principal curvature less than a unit sphere}
\date{}
\author{Matthew Bolan}
\newtheorem{theorem}{Theorem}
\newtheorem{lemma}{Lemma}
\newtheorem{remark}{Remark}
\begin{document}

\maketitle

\section{Introduction}
The Pestov-Ionin theorem \cite{pestovionin} states that every simple closed $C^2$ curve in $\mathbb R^2$ having curvature in $[-1,1]$ encloses a unit disk, and in particular that the unit circle encloses a minimal area among all such curves.

In this note we produce a family of bodies in $\mathbb R^3$ parameterized by $\varepsilon > 0$, each bounded by a smooth topological sphere with principal curvatures in $[-1, 1]$, and having volume arbitrarily close to
\[16 - 4\sqrt 3  + \left(10 \sqrt 3 - 14\right) \pi - \left(\frac{10}{3} - \sqrt 3\right) \pi^2 \approx 3.70. \]
 Thus, in contrast to $\mathbb R^2$, the unit sphere (which bounds a ball of volume $\frac{4 }{ 3} \pi \approx 4.19$) does not enclose the minimal  volume among all smooth spheres in $\mathbb R^3$ with principal curvatures in $[-1,1]$. This answers a folklore question of Dmitri Burago and Anton Petrunin.

\begin{figure}[htbp]
    \centering
    \includegraphics[height=5cm, keepaspectratio]{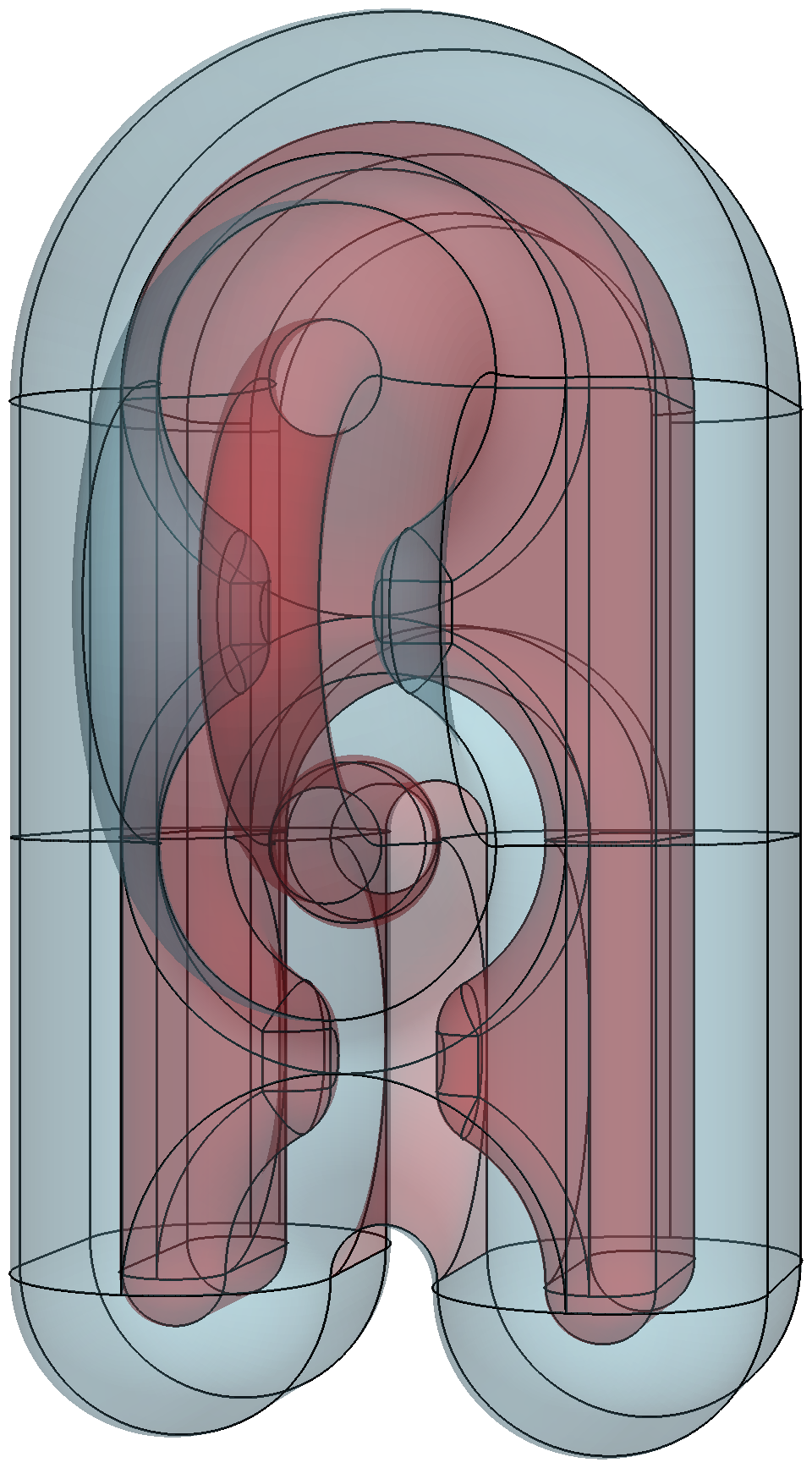}
    \hfill
    \includegraphics[height=5cm, keepaspectratio]{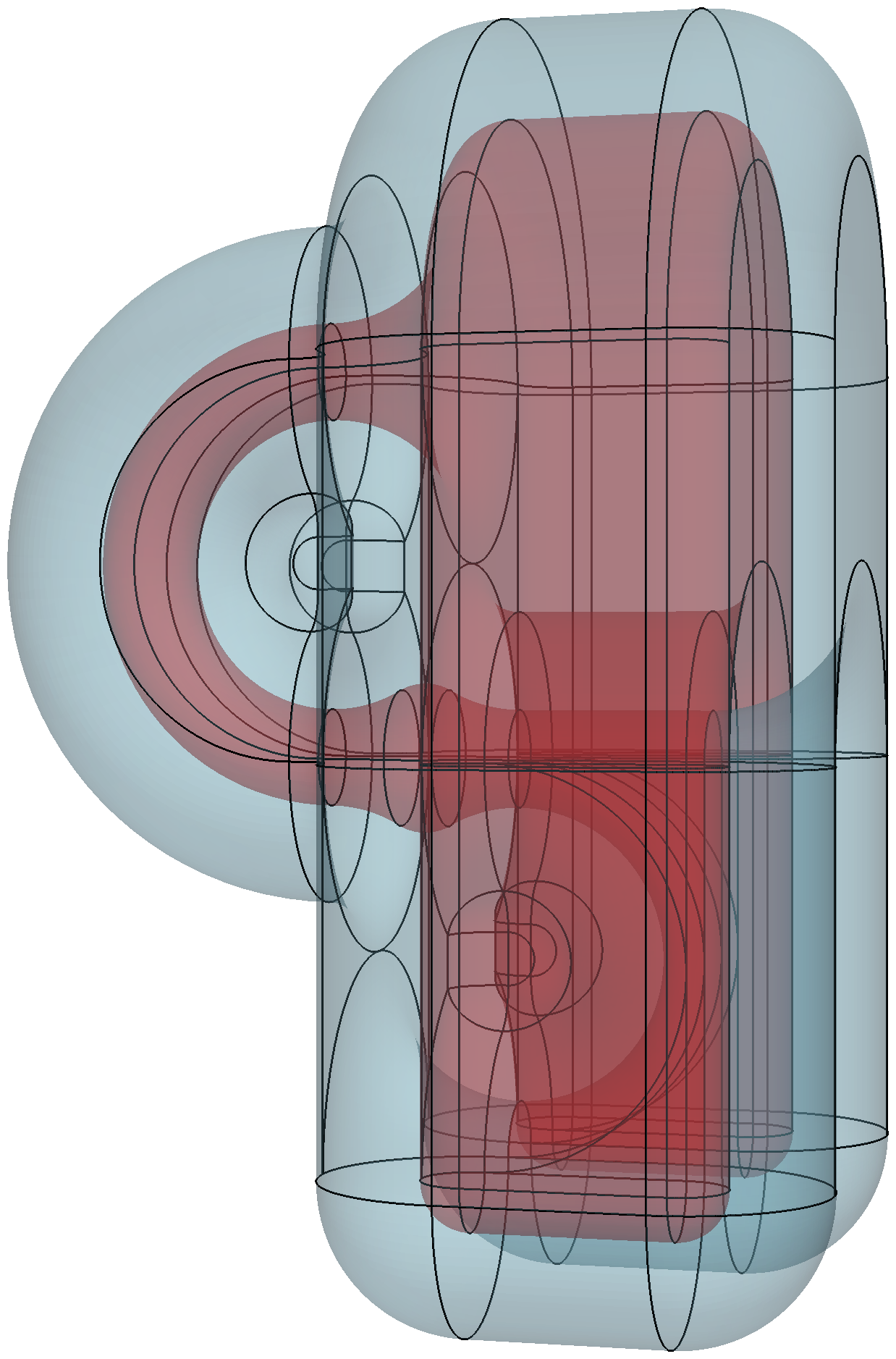}
    \hfill
    \includegraphics[height=5cm, keepaspectratio]{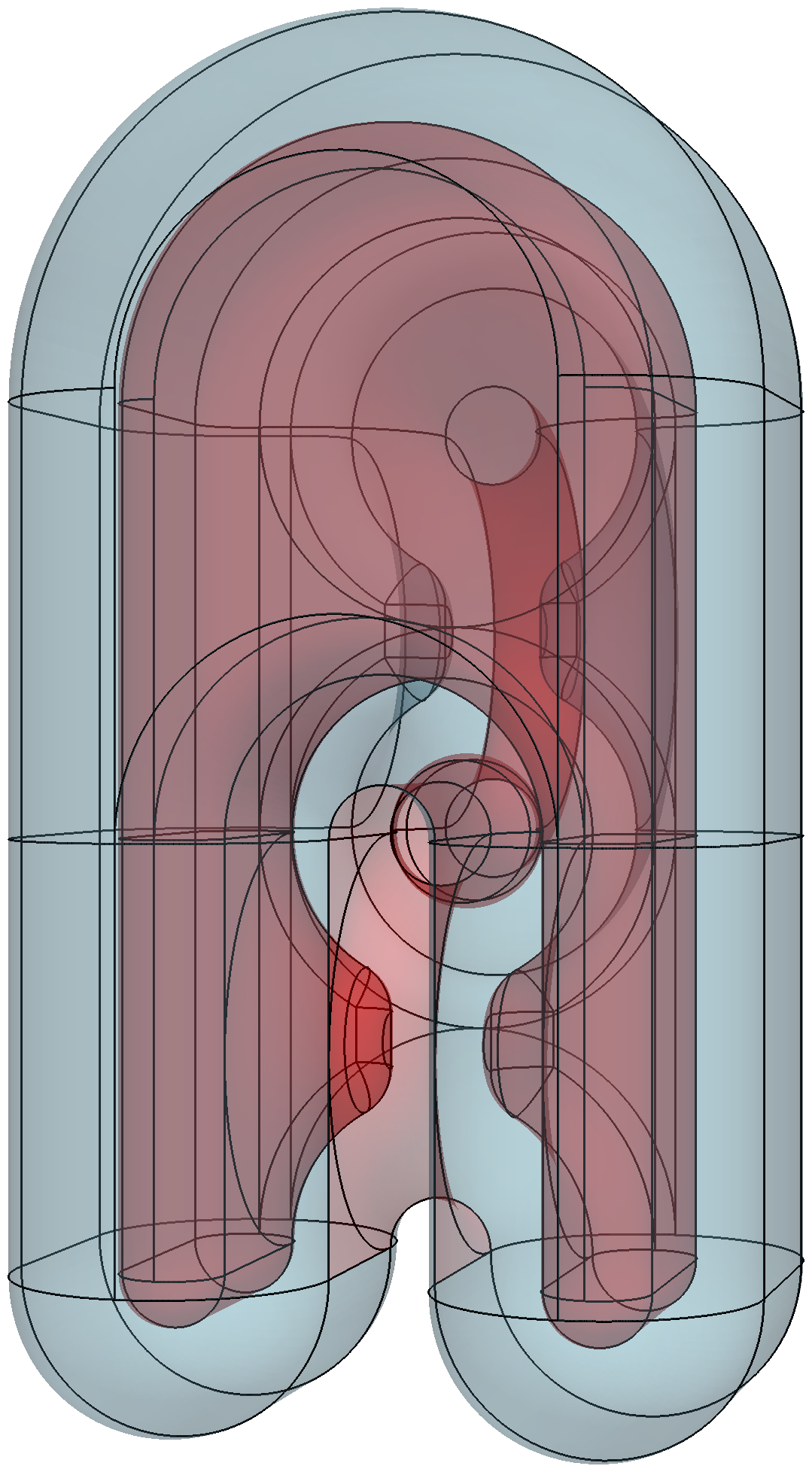}
    \caption{The member of the family with $\varepsilon = 0.5$, shaded red and blue.}
    \label{fig:genus_0_epsilon_half}
\end{figure}

\begin{figure}
    \centering
    \includegraphics[width=\linewidth]{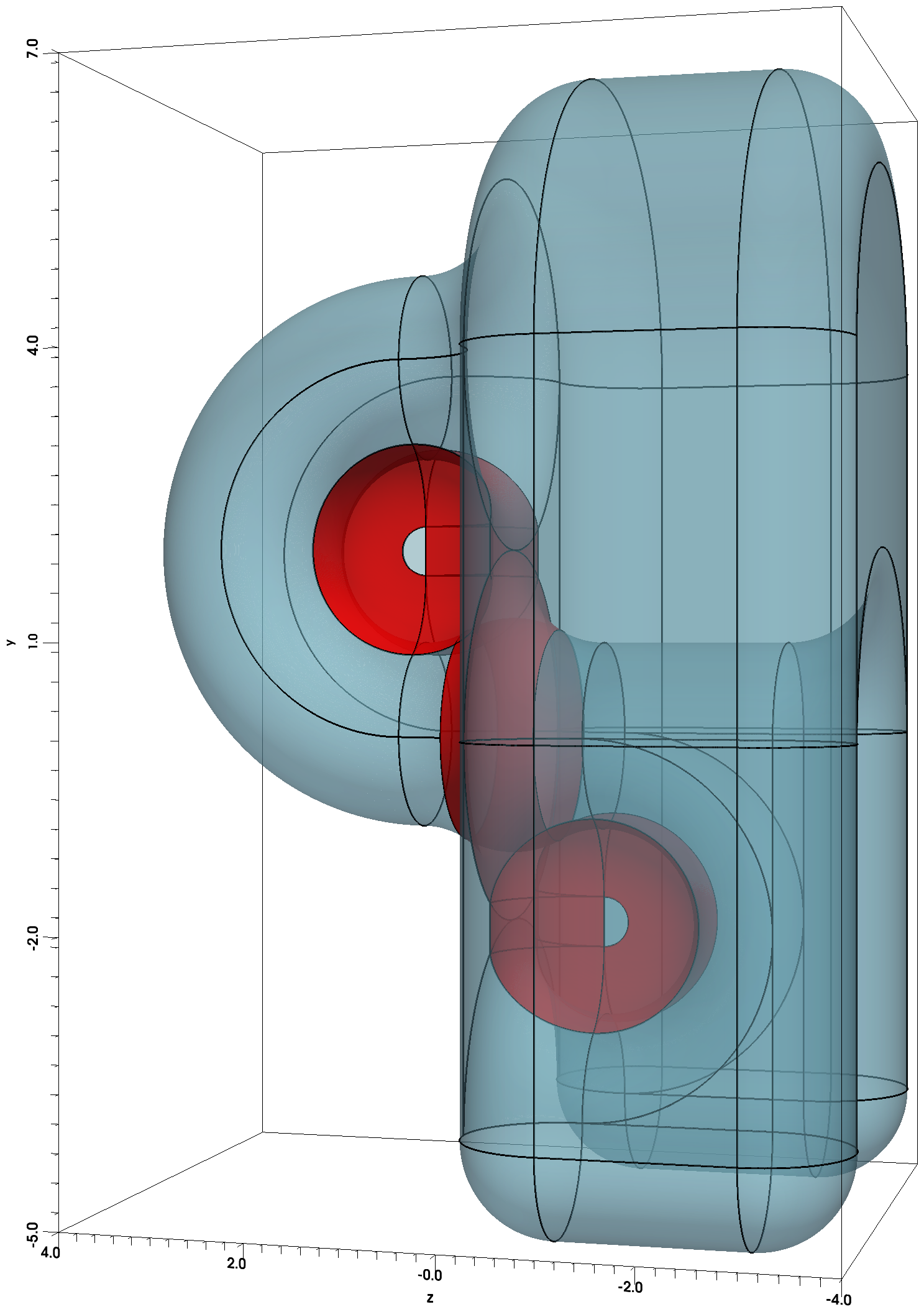}
    \caption{The limiting object ($\varepsilon = 0$) of the family, with a volume of approximately $3.70$ contained entirely in the red part. The blue part is a sheet of thickness $0$ where the inner cavity has run up against the outer walls, together with semicircular membranes of thickness $0$ in the center of the corks.}
    \label{fig:limiting_genus_0}
\end{figure}

An example with $\varepsilon = 0.5$ is shown in Figure~\ref{fig:genus_0_epsilon_half}, shaded red and blue. One can see the surface is a topological sphere by imagining pulling the red surfaces through the passage where they connect to the blue, though we later check this by computing the Euler characteristic. As one decreases $\varepsilon$, the red and blue parts of the surface get arbitrarily close together in most places, approaching the shape in Figure~\ref{fig:limiting_genus_0} which we will call the limiting body.

Members of our family will have a ``fat part" containing most of the volume, and a ``thin part" where two parts of the surface of the volume are within $2\varepsilon$ of each other (these are the red and blue parts of Figure~\ref{fig:limiting_genus_0} respectively). Rather than describing every surface in the ``thin part" twice and introducing an $\varepsilon$ to all equations, we will devote most of our effort to constructing the limiting body where $\varepsilon = 0$ and computing its volume.

In the Appendix there is a piecewise smooth parametric description of the surface resulting from thickening the limiting body by $\varepsilon$ and rescaling so that all principal curvatures are in $[-1, 1]$. This list has been symbolically verified to define a $C^1$ topological sphere by a SageMath script, which has been made available as an auxiliary file associated with the preprint on arXiv. Truly smooth examples having volume arbitrarily close to the limiting example can then be obtained by selecting a small enough $\varepsilon$ and applying standard smoothing processes. 

\subsection{History}
Lagunov \cite{lagunovinitialpapergenerallowerbound, lagunovgenerallowerbound} gave a generalization of the Pestov-Ionin theorem which when specialized to $\mathbb R^3$ shows that the inside of a closed surface $S$ contains a ball of radius $\frac{2}{\sqrt{3}} - 1 \approx 0.15$, and that this bound is tight \cite{lagunovgeneralupperbound} (see also \cite[p.115-117]{petrunin2024differentialgeometrycurvessurfaces}). Later Lagunov and Fet \cite{lagunovfetgenuslowerbound} (see \cite{lagunov2019extremalproblemssurfacesprescribed} for an English translation by Bishop) showed that if S is further required to be genus zero, then the inside of S must contain a larger ball of radius $\frac{\sqrt{6}}{2}-1\approx 0.22$, which is again tight \cite{lagunovfetgenusupperbound}.

Lagunov and Fet's results give a satisfactory extension of the Pestov-Ionin theorem to $\mathbb R^3$ in the sense that they accurately identify the largest radius of a sphere guaranteed to be contained within the enclosed volume of a surface with bounded curvature, both with and without the genus $0$ constraint. However, the examples they produce still enclose volume larger than the unit ball in $\mathbb R^3.$ Petrunin studied whether the volume enclosed by the surface S, not just the largest ball it encloses, could be less than a unit ball. This culminated in an example which has now become known as ``Petrunin's Fishbowl," a genus two surface enclosing a volume arbitrarily close to $\frac{22}{3} \pi - 2 \pi^2 \approx 3.30 < \frac{4}{3} \pi$ (see the appendix of \cite{qiu2025curvatureboundedsphereproblem}). The question of whether there could be a genus zero surface bounding less volume than the unit sphere was later popularized by Burago and Petrunin.

There has been a flurry of progress on this question in 2025. In \cite{qiu2025curvatureboundedsphereproblem}, Hongda Qiu shows that the result is true if the surface can be contained within an open ball of radius $2$, and that the surface in fact encloses a unit ball if the volume it encloses is star-shaped. He also suggests that a counterexample might be obtained by gluing on four disks to Petrunin's Fishbowl, in a sense adding two ``corks" to plug the holes. 

In the MathOverflow question \cite{petruninsMOQuestion}, Anton Petrunin asks whether such an example exists and outlines an improvement to his genus $2$ example due to Rostislav Matveev which attains volume arbitrarily close to $$2 \pi\left(2- \frac{\sqrt{3}}{3}\right)\left (\sqrt{3} - \frac{\pi }{2}\right) \approx 1.44.$$ Petrunin also includes calculations suggesting that a cork plugging a correctly shaped hole could theoretically have volume as small as $$
2\pi \left(2 - \frac{2}{\sqrt{3}}\right) \left(\sqrt{3} - \frac{\pi}{2} \right) \approx 0.85,$$
for a potential volume as low as $3.15.$ These estimates leave enough room to suggest that a counterexample should exist along the lines proposed in \cite{petruninsMOQuestion, qiu2025curvatureboundedsphereproblem}. However, neither the improved version of Petrunin's Fishbowl nor the proposed cork construction contains a ball of radius greater than $0.16 < 0.22$, so some work must be done to make these shapes compatible with Lagunov and Fet's results in the genus $0$ case \cite{lagunovfetgenuslowerbound}. 

Terence Tao, with some AI assistance, responded to Petrunin's question with another proof of impossibility in the cases of star-shaped bodies; a proof that there is some non-zero lower bound on the volume (which also follows from the work of Lagunov and Fet); and some useful discussion of the general shape of possible counterexamples \cite{taosMOAnswer}.

On October 17th, 2025, I posted a comment under Petrunin's question, containing a link to a 3-dimensional model\footnote{https://www.desmos.com/3d/p8xzkd6vv2 } of a genus $0$ counterexample in the web based graphing calculator Desmos, though I had to correct an incorrect surface on the 20th. On October 24th I posted an answer \cite{bolansMOAnswer} explaining this counterexample further and containing a corrected and slightly revised Desmos model\footnote{https://www.desmos.com/3d/hwgiep1wfa}. 

The counterexample, which will be the subject of this paper, follows the suggestion to modify Matveev's improved genus 2 example by introducing two corks. The main contributions are a cork design that connects nicely to the genus 2 example, and a modification of the genus 2 example to accommodate not one but two locations where these corks fit. 

Also on October 24th, Hongda Qiu posted a counterexample of his own to the MathOverflow thread \cite{qiusMOAnswer}. After he implemented some corrections, the resulting construction appears to have used an identical ``volume-containing" part to the construction we present here. On November 16th, Qiu posted a preprint \cite{qiu2025supersqueezedsphere} going further into the history of the problem and giving a qualitative description of his construction.

\section{A modified Petrunin's Fishbowl}

After Matveev's improvement, the limiting object of Petrunin's Fishbowl family, shown in Figure~\ref{fig:petrunin_limiting}, contains a ``handle" of the shape depicted in Figure~\ref{fig:top_handle_unfilled} (for parameterizations defining the handle see Section~\ref{section:cork}). This handle provides a natural place to try to plug with a cork. In Section \ref{section:cork} we will construct a cork of volume approximately $1.13$ that will fit nicely into this handle --- see Figure~\ref{fig:plugged_handle} for a picture. If we only plug this handle then we obtain an easy genus $1$ example beating the unit ball. However, to obtain a genus $0$ example we must find a second efficient location to place a cork. There is no easy location that will both keep the volume below that of the unit ball and result in a figure with a connected surface, so we instead must modify the example to accommodate one.

\begin{figure}[H]
    \centering
    \includegraphics[width=0.5\linewidth]{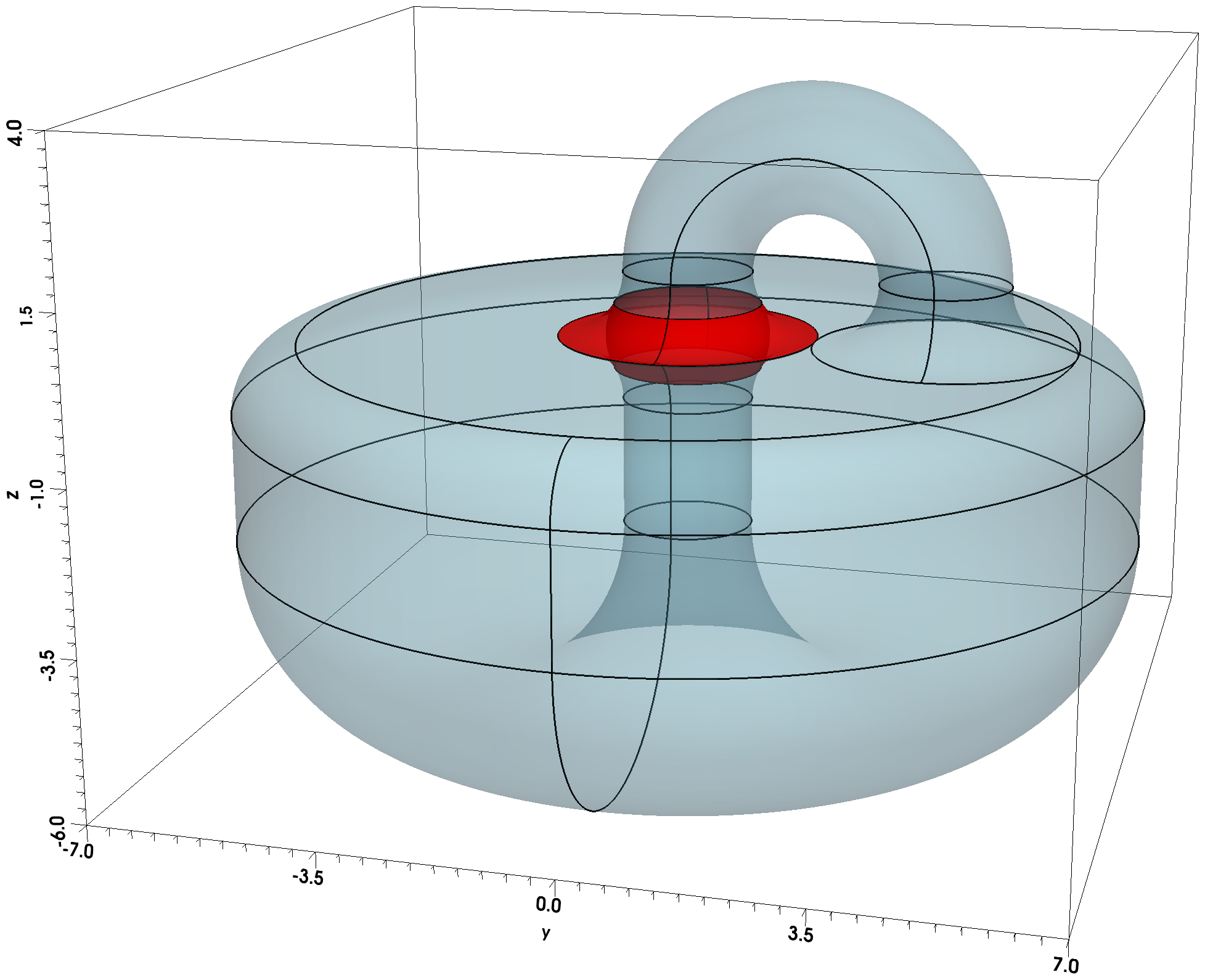}
    \caption{The limiting object of Petrunin's Fishbowl after Matveev's improvement, following the description and models in \cite{petruninsMOQuestion}. It contains a volume of approximately 1.44 entirely in the red part, while the blue part is thickness $0$ in the limit. As always, true examples are obtained by thickening and smoothing.}
    \label{fig:petrunin_limiting}
\end{figure}

\begin{figure}[H]
    \centering
    \includegraphics[width=0.5\linewidth]{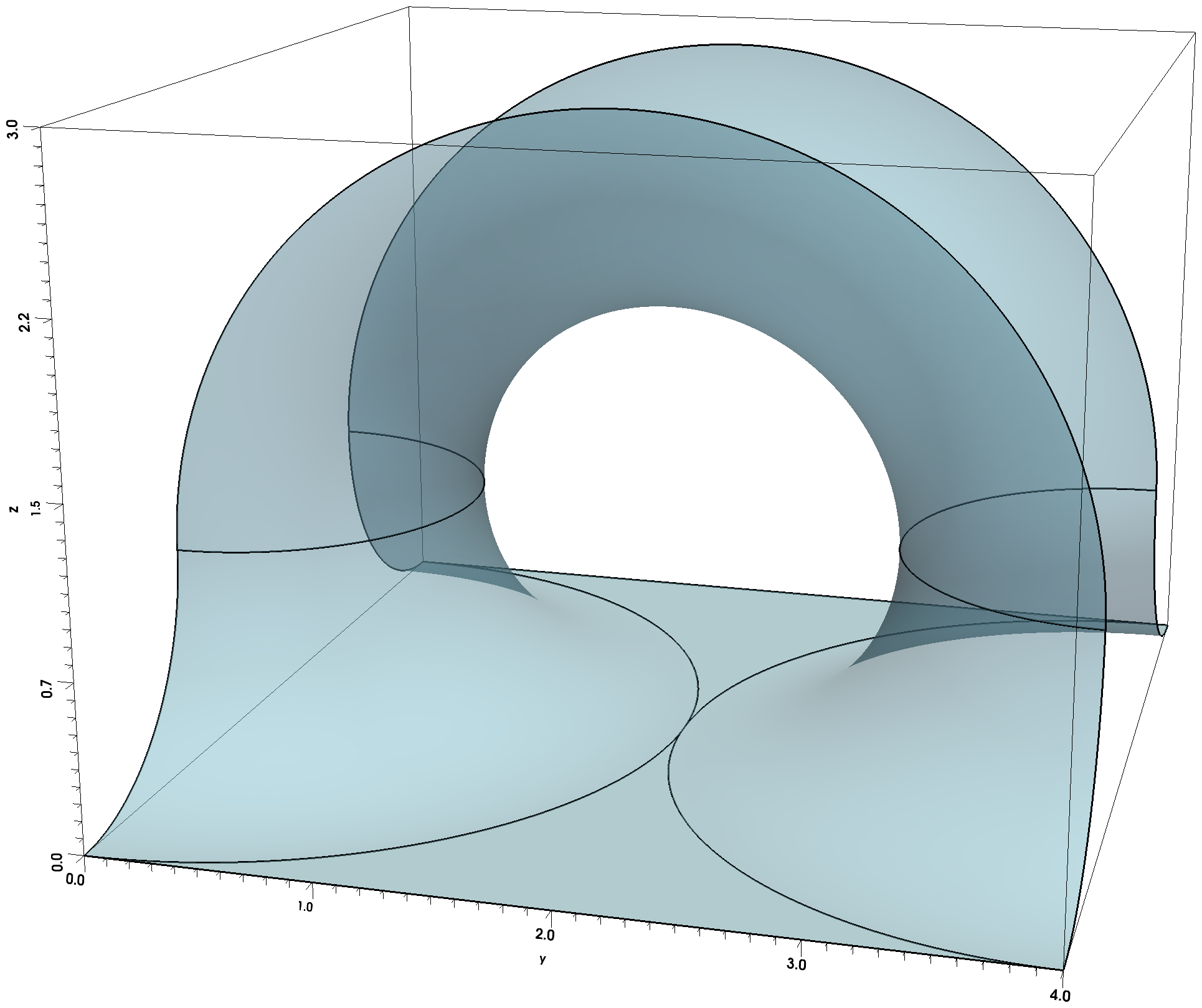}
    \caption{The top handle of Petrunin's Fishbowl. It is comprised of three parts of tori, each of major radius $2$ and minor radius $1$, and part of a plane.}
    \label{fig:top_handle_unfilled}
\end{figure}
A key insight is that it is possible to squeeze the bottom of the fishbowl to accommodate a second handle --- see Figure~\ref{fig:squeezing} for how we do this. So that we do not need to invent two distinct corks, we have arranged things to match the shape of the original handle where the cork will attach. In fact, the new handle is a reflection of the old handle about the origin.

\begin{figure}[H]
    \centering
    \includegraphics[width=0.4\linewidth]
    {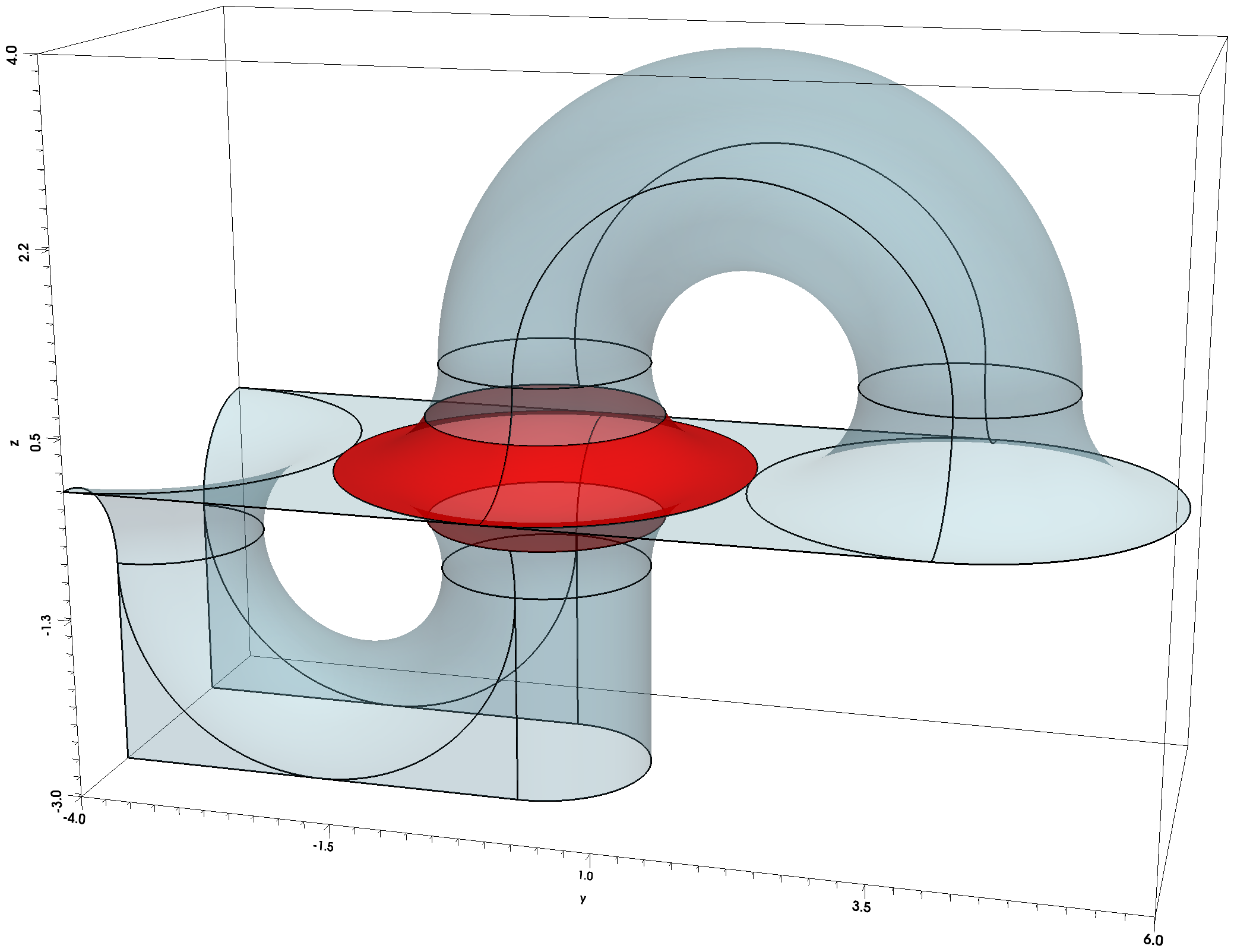}
    \includegraphics[width=0.4\linewidth]{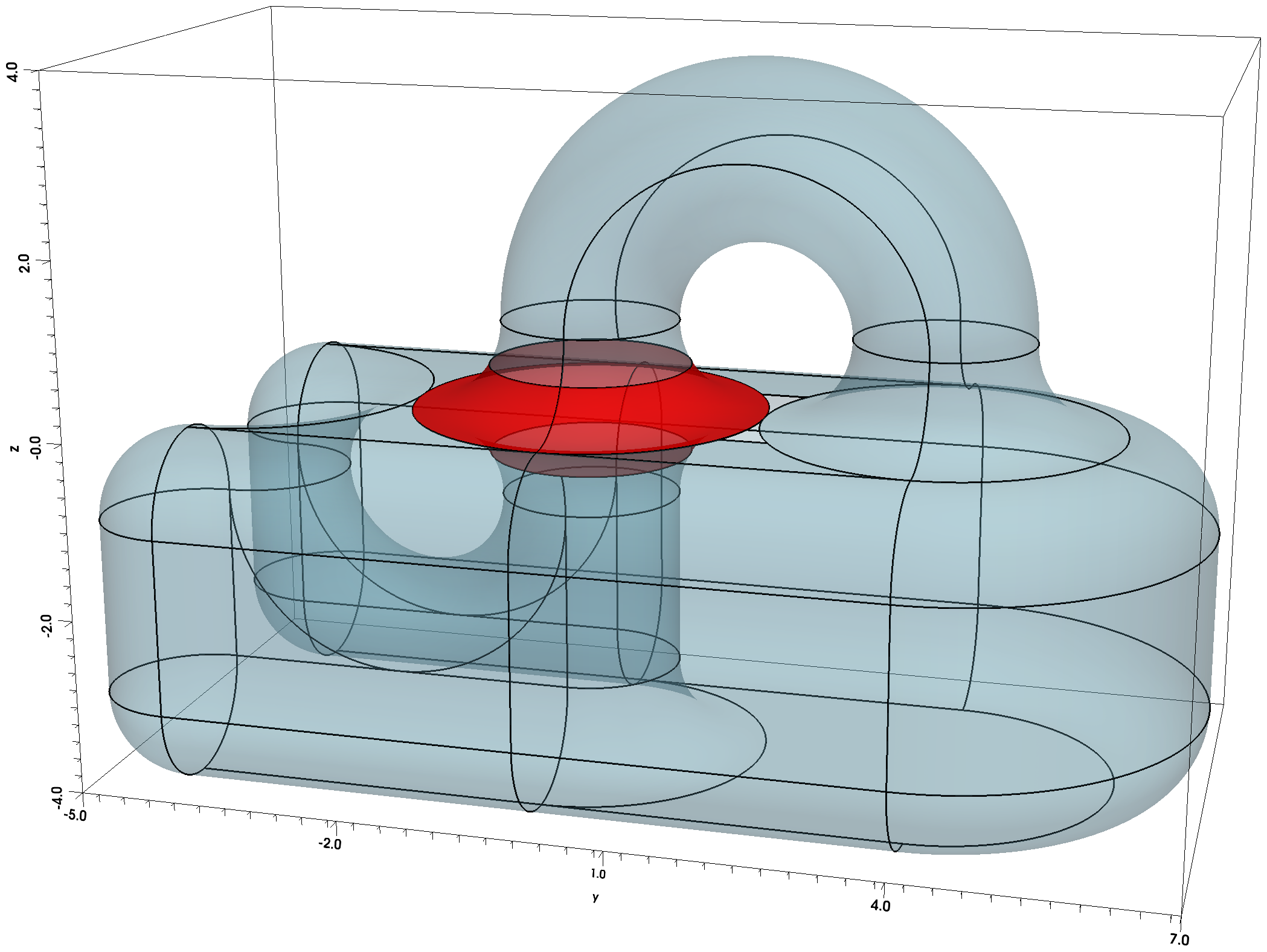}
    \caption{The result of squeezing Petrunin's Fishbowl to create a lower handle matching the upper, both with and without the part of the surface closing up the shape. The only part with non-zero volume in the limit is shown in red.}
    \label{fig:squeezing}
\end{figure}

For actual parameterizations of the surfaces describing this modification, see the Appendix. Though there are many surfaces involved, every blue surface used in Figure~\ref{fig:squeezing} is a part of a plane, sphere, cylinder, or else a torus of minor radius $1$ and major radius at least $2$. These meet the curvature bounds automatically, and their simple geometry makes arranging the $C^1$ condition easy. Confirmation that the resulting surface is indeed $C^1$ was done via the SageMath script in the auxiliary files of the arXiv submission. The closing surfaces have been chosen to be relatively small, made from simple surfaces, and to result in a final shape similar to Petrunin's original example to make the squeezing process clear.

\begin{figure}[H]
    \centering
    \begin{tikzpicture}[scale=2]

  \draw[dashed] (0,-0.5) -- (0,0.5) node[above] {$y$};

  \coordinate (C1) at ({2-sqrt(3)},0);
  \coordinate (C2) at (2,1);
  \coordinate (C3) at (2,-1);

  \coordinate (P1) at (2,0);
  \coordinate (P2) at ({2 - sqrt(3)/2}, 0.5);
  \coordinate (P3) at ({2 - sqrt(3)/2}, -0.5);

  \coordinate (O) at ({2 - sqrt(3)/3},0);

  \draw[thick] (C2) ++(210:1) arc[start angle=210, end angle=270, radius=1];
  \draw[thick] (C3) ++(90:1) arc[start angle=90, end angle=150, radius=1];
  \draw[thick] (C1) ++(-30:1) arc[start angle=-30, end angle=30, radius=1];

  \fill (P1) circle (0.02) node[above right] {$P_1$};
  \fill (P2) circle (0.02) node[above left] {$P_2$};
  \fill (P3) circle (0.02) node[below left] {$P_3$};

  \fill (O) circle (0.02) node[above left, xshift=5, yshift=-0.5] {$O$};
\end{tikzpicture}
    \includegraphics[width=0.5\linewidth]{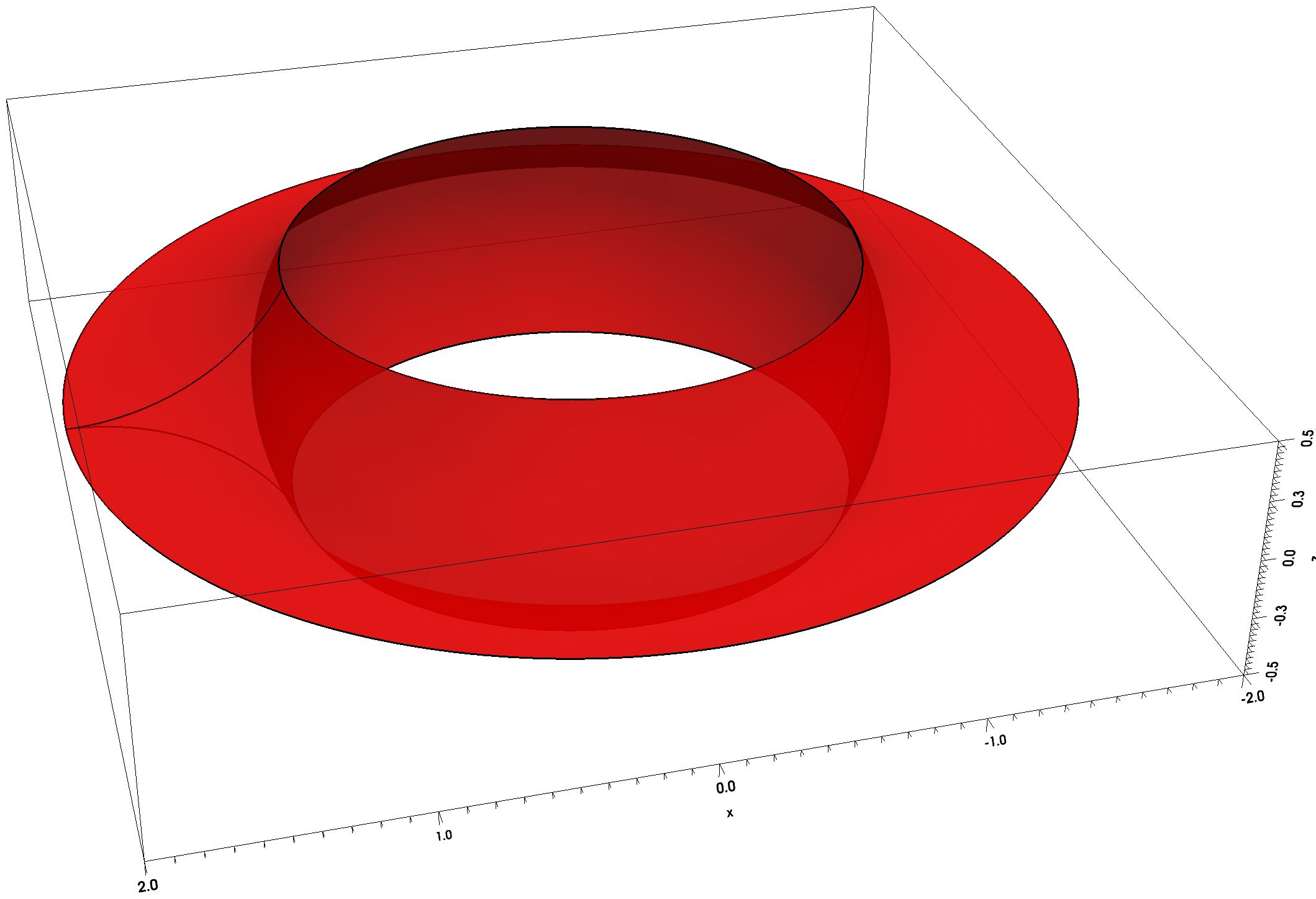}
    \caption{The only part of Petrunin's Fishbowl with any volume, a solid of revolution of volume approximately $1.44.$ Its shape is obtained by revolving the deltoid between the three mutually tangent unit circles centered at $(2 - \sqrt{3}, 0), (2, 1)$ and $(2, -1)$ around the $y$-axis.}
    \label{fig:genus_2_volume}
\end{figure}
The shape and volume of the fat part --- see Figure~\ref{fig:genus_2_volume} --- has remained unchanged compared to the original fishbowl and so has already been computed by Petrunin in \cite{petruninsMOQuestion}. It is congruent to the solid of revolution about the $y$ axis depicted in Figure \ref{fig:genus_2_volume}. This solid of revolution is swept out by a deltoid of area $(\sqrt{3} - \frac{\pi}{2})$ with centroid $(2 - \frac{\sqrt{3}}{3}, 0)$, and so has volume \begin{equation}
    2 \pi\left(2- \frac{\sqrt{3}}{3}\right)\left (\sqrt{3} - \frac{\pi }{2}\right) \approx 1.44 \label{eq:Petrunin_Volume}
\end{equation}
by Pappus's centroid theorem.
The arcs $\overset{\frown}{P_1 P_2}$ and $\overset{\frown}{P_1 P_3}$ sweep out parts of tori of major radius $2$ and minor radius $1$, and so these surfaces meet the curvature bound. The arc $\overset{\frown}{P_2 P_3}$ sweeps out a surface corresponding to  \begin{equation}
    \left(-\left(r_{1}+\sin\left(v\right)\right)\cos\left(u\right),-\left(r_{1}+\sin\left(v\right)\right)\sin\left(u\right),-\cos\left(v\right)\right)
\end{equation} in the actual fishbowl,
where $r_1 = 2 - \sqrt{3}$, $\frac{\pi}{3} \le v \le \frac{2 \pi}{3}$ and $0 \le u \le 2 \pi$. This has principal curvatures $1$ and $\frac{\sin(v)}{r_1 + \sin(v)}$, which obey the needed bounds as $0 \le \sin(v) \le r_1 + \sin(v)$ in the given range (Alternately, one could observe that this surface is part of the positively curved part of a spindle torus with minor radius $1$, which always has principal curvatures in $[0,1]$ when oriented correctly).

Armed with this example, it remains to demonstrate an appropriate cork for this handle.

\section{Construction of a cork}

\label{section:cork}
It remains to plug the two ``handles" of our modified fishbowl. As we have arranged the handles to be of the same shape and symmetric about the origin, it suffices to construct a single solid cork $C$ to insert into the first handle. By symmetry the second handle can then be plugged with $-C$. To define $C$ we construct a surface $S_1$ (see Figure~\ref{fig:cork_surface} for a picture) which can be glued cleanly onto the handle, and let $S_2$ be its reflection through the $x=0$ plane. The cork $C$ will be the body bounded by the surfaces $S_1$, $S_2$ and the handle, together with a thickness $0$ semicircular membrane where $S_1$ and $S_2$ coincide. 

Concretely, the first handle we plug consists of
\begin{itemize}
    \item The toroidal patch with $0 \le u \le \pi$ and $\pi \le v  \le 2 \pi$ defined by
\begin{equation}
\left(\cos\left(v\right),\left(2+\sin\left(v\right)\right)\cos\left(u\right)+2,\left(2+\sin\left(v\right)\right)\sin\left(u\right)+1\right) \label{eq:backpack}
\end{equation}
\item The toroidal patch with $0 \le u \le \pi$ and $\pi \le v \le \frac{3 \pi}{ 2}$ defined by
\begin{equation} \left(\left(2+\sin\left(v\right)\right)\cos\left(u\right),\left(2+\sin\left(v\right)\right)\sin\left(u\right),\cos\left(v\right)+1\right) \label{eq:backpack_bottom_base}
\end{equation}

\item the toroidal patch with $\pi \le u \le 2 \pi$ and $\pi \le v \le \frac{3 \pi}{ 2}$ defined by
\begin{equation}
\left(\left(2+\sin\left(v\right)\right)\cos\left(u\right),\left(2+\sin\left(v\right)\right)\sin\left(u\right)+4,\cos\left(v\right)+1\right) \label{eq:backpack_top_base}
\end{equation}
\item the part of the plane
\begin{equation}z=0. \label{eq:back}\end{equation}
obeying $x^2 + (y-4)^2 \ge 4$; $x^2 + y^2 \ge 4,$ $-2 \le x \le 2$ and $0 \le y \le 4$.
\end{itemize}
 See Figure~\ref{fig:top_handle_unfilled} for a picture of this handle.

For convenience set $r_1 = 2 - \sqrt 3$. We now define a surface $S_1$ which can be glued cleanly into this handle. It consists of five parts:
\begin{itemize}

    \item The toroidal patch  with $0 \le u \le \pi$ and $\frac{2\pi}{3} \le v \le \pi$ defined by \begin{equation} \label{eq:cork_surface_torus}
\left(\cos\left(v\right)+1,\left(r_{1}+\sin\left(v\right)\right)\cos\left(u\right)+2,\left(r_{1}+\sin\left(v\right)\right)\sin\left(u\right)+1\right)
    \end{equation} 
    \item The semicircle  with $0 \le u \le \pi$ and $0 \le v \le r_1$ defined by \begin{equation} \label{eq:semicircle}
\left(0,v\cos\left(u\right)+2,v\sin\left(u\right)+1\right)
    \end{equation}
    \item The $\frac{1}{6}$th of a unit hemisphere with $0 \le u \le \frac{\pi}{2}$ and $\frac{2 \pi}{3} \le v \le \pi$ defined by \begin{equation}
        \left(\cos\left(v\right)\cos\left(u\right)+1,-\sin\left(v\right)\cos\left(u\right)+\sqrt{3},-\sin\left(u\right)+1\right)
    \end{equation}.
    \item The $\frac{1}{6}$th of a unit hemisphere  with $0 \le u \le \frac{\pi}{2}$ and $\pi \le v \le \frac{4 \pi}{3}$ defined by \begin{equation}
        \left(\cos\left(v\right)\cos\left(u\right)+1,-\sin\left(v\right)\cos\left(u\right)+4-\sqrt{3},-\sin\left(u\right)+1\right)
    \end{equation}
    \item The cylindrical patch with $-r_1 \le u \le r_1$ and $\frac{\pi}{2} \le v \le \pi$ defined by \begin{equation}
        \left(\cos\left(v\right)+1,-u+2,-\sin\left(v\right)+1\right).
    \end{equation}
\end{itemize}
See Figure~\ref{fig:cork_surface} for a picture of this surface, and Figure~\ref{fig:plugged_handle} for how it fits into the handle.

\begin{figure}
    \centering
    \includegraphics[width=0.5\linewidth]{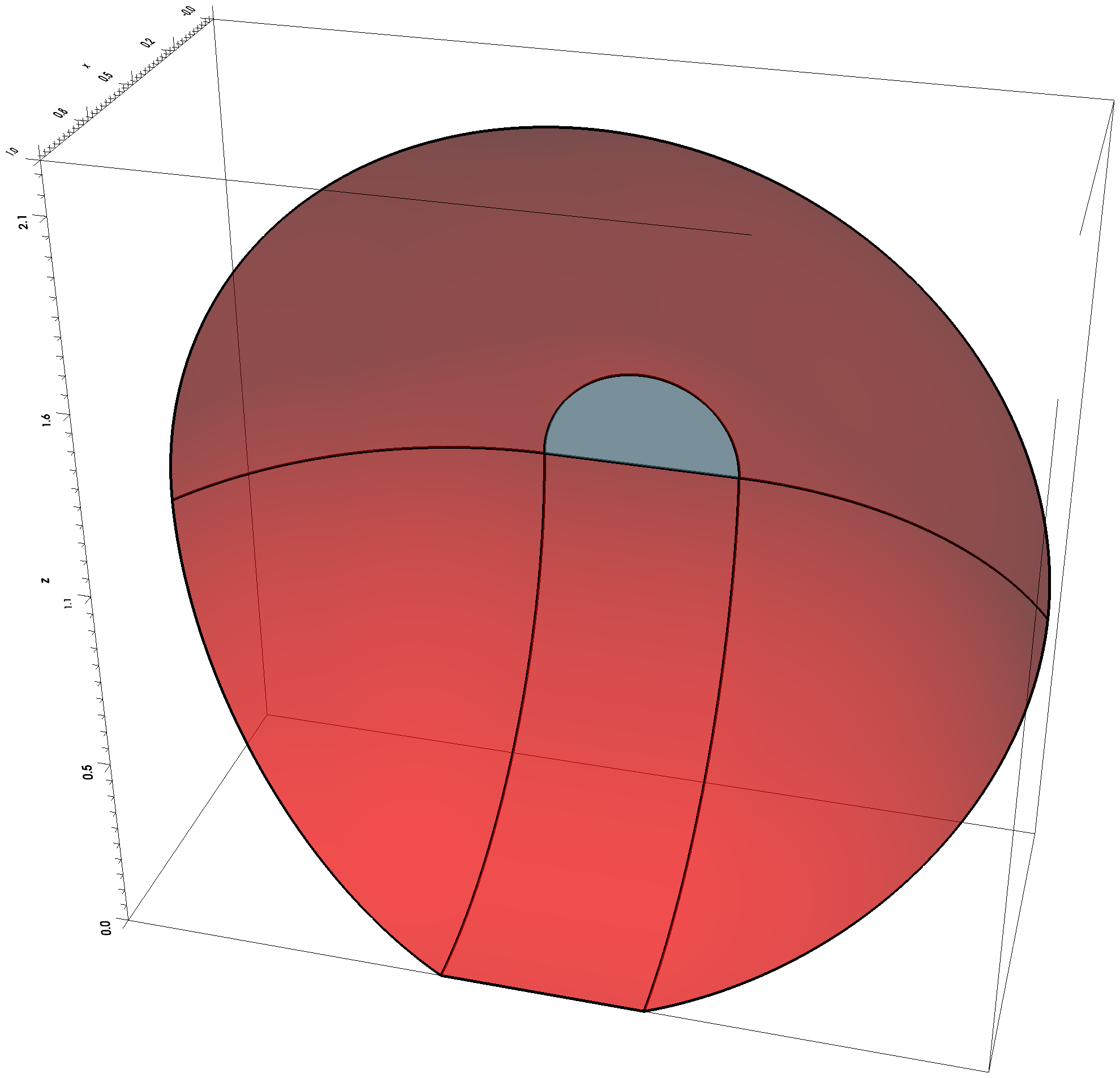}
    \caption{The 5-part topological disk $S_1$ that makes up one side of the cork.}
    \label{fig:cork_surface}
\end{figure}

The first of these surfaces \eqref{eq:cork_surface_torus} has principal curvatures at $(u,v)$ of $\frac{\sin v}{r_1 + \sin v}$ and $1$. These are in $[0,1]$ since $0 \le u \le \pi$. All the other surfaces are parts of flat planes, unit spheres, and unit cylinders, and so obey the curvature condition automatically. One further checks that the underlying surfaces are tangent to each other and the surface of the handle along the appropriate boundaries. As always, these checks were done with the SageMath script in the auxiliary files of the arXiv submission.

\begin{remark}
$S_1$ is part of the surface consisting of points of distance $1$ from the solid semicircle $U' = U + (1,0,0)$, where $U$ is the semicircle \eqref{eq:semicircle}, from which one can confirm the curvature and $C^1$ conditions manually. This structure is no accident: unit spheres with center on $U'$ are all tangent to $U$, and those centered on $\partial U'$ are additionally tangent to the handle. 
\end{remark}

We let $S_2$ be the reflection of $S_1$ across the plane $x=0$. Note that the semicircle \eqref{eq:semicircle} gets reflected into itself. This is intentional as this semicircle is part of the ``thin" part of the body, and when the shape is finally thickened by $\varepsilon$ this will become two semicircles separated by $2 \varepsilon$. 

We let $C$ be the union of $S_1, S_2$, and the solid body bound between them and the handle. Monte Carlo integration estimates the volume of $C$ at around $1.129$. We now confirm this estimate by computing the volume exactly.

\begin{figure}[htbp]
    \centering
    \begin{subfigure}[b]{0.45\textwidth}
        \label{fig:cork_volume}
        \centering
        \includegraphics[height=2.8cm, keepaspectratio]{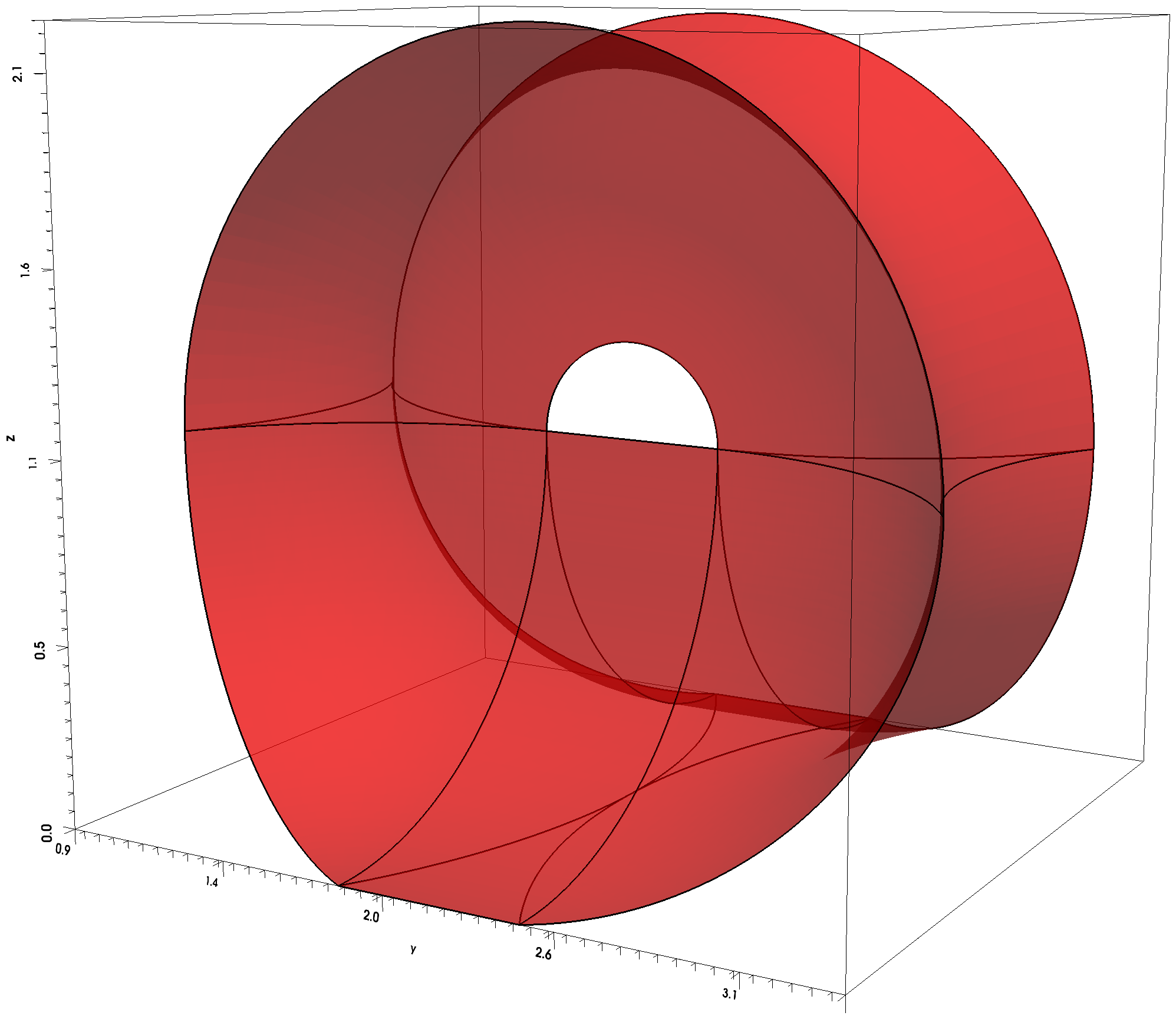}
        \caption{The part of the cork with non-zero volume.}
    \end{subfigure}
    \hfill
    \begin{subfigure}[b]{0.45\textwidth}
        
        \centering
        \includegraphics[height=2.8cm, keepaspectratio]{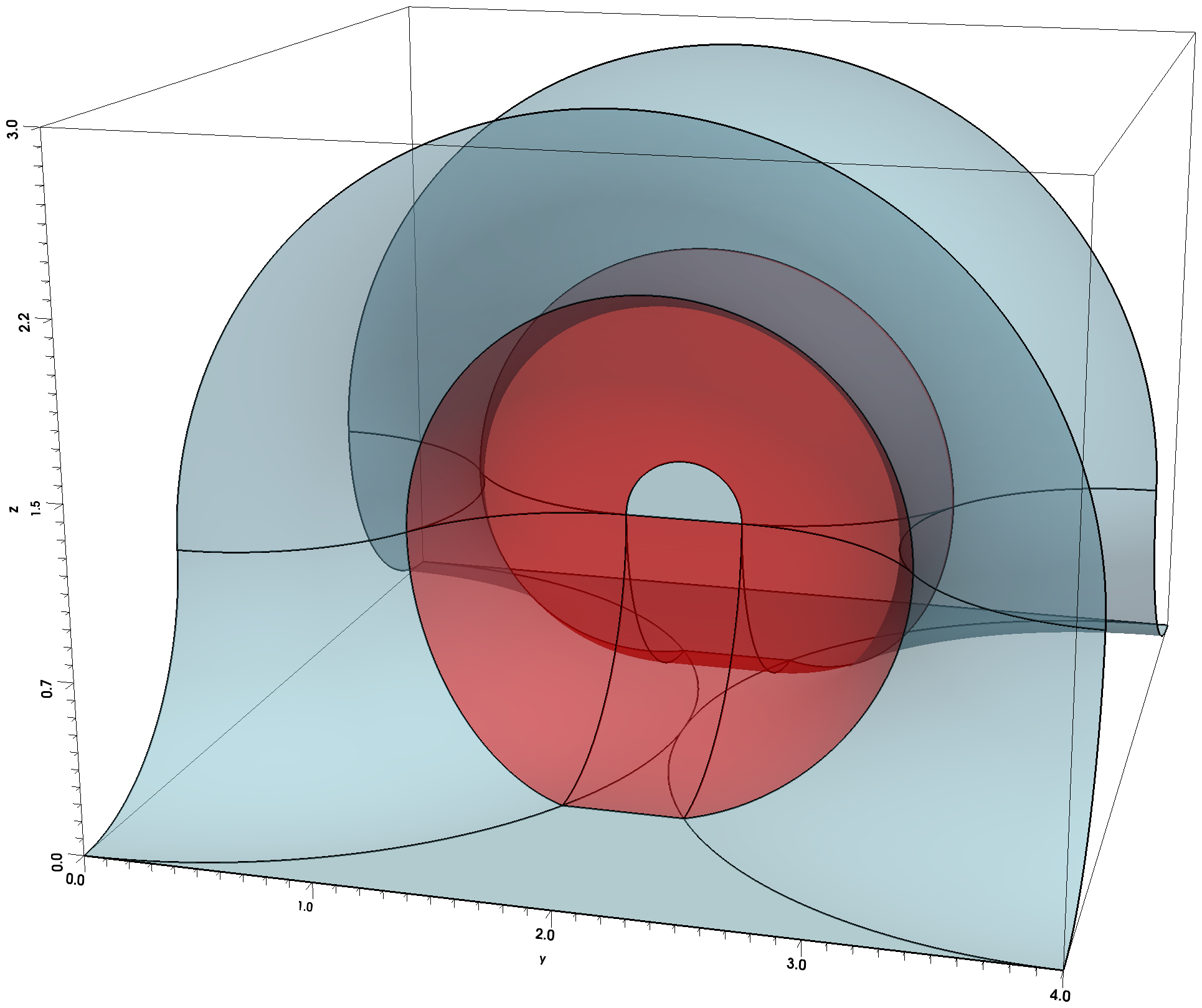}
        \caption{The handle after plugging it with the cork}
        \label{fig:plugged_handle}
    \end{subfigure}
    \caption{The cork and how it fits into the handle.}
    \label{fig:cork_and_handle}
\end{figure}

\begin{lemma}
\label{lem:cork_volume}
The solid ``cork" $C$ has volume $$
\left(8 - 2\sqrt 3\right) - \left(6 - 3\sqrt 3\right) \pi - \left(\frac{2}{3} - \frac{1}{\sqrt{3}}\right) \pi^2 \approx 1.129
$$
\end{lemma}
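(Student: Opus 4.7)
The plan is to compute $\operatorname{Vol}(C)$ by Cavalieri's principle, slicing $C$ with planes $x = x_0$ for $x_0 \in [-1, 1]$. By the reflective symmetry taking $S_1$ to $S_2$, $\operatorname{Vol}(C) = 2\int_0^1 A(x_0)\,dx_0$, where $A(x_0)$ is the area of the cross-section $C \cap \{x = x_0\}$ in the $(y, z)$-plane. Set $\rho(x_0) = \sqrt{2x_0 - x_0^2}$ and $\sigma(x_0) = 2 - \sqrt{1 - x_0^2}$. A direct check gives $r_1 + \rho(x_0) = \sigma(x_0)$ exactly at $x_0 = 1/2$, with matching first derivatives; this is the tangency between the $S_1$ torus \eqref{eq:cork_surface_torus} (present for $x \in [0, 1/2]$) and handle piece 1 \eqref{eq:backpack}, and splits the computation into two regimes.

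For $x_0 \in [0, 1/2]$, the slice of $S_1$ is already a closed curve bounding the cork's cross-section, consisting of the upper semicircle of radius $R := r_1 + \rho$ centered at $(2, 1)$ from the torus, two quarter-circles of radius $\rho$ centered at $(2 \pm r_1, 1)$ in their outer-lower quadrants from the hemispheres, and the segment $\{z = 1 - \rho,\ y \in [2 - r_1, 2 + r_1]\}$ from the cylinder. Decomposing the enclosed region into an upper half-disk, a central rectangle, and two lower-outer quarter-disks gives
\[
A_1(x_0) \;=\; \tfrac{\pi}{2}(r_1 + \rho)^2 + 2r_1\rho + \tfrac{\pi}{2}\rho^2 \;=\; \tfrac{\pi r_1^2}{2} + (2 + \pi)\, r_1\rho + \pi \rho^2.
\]

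For $x_0 \in [1/2, 1]$ the torus contribution is absent; the $S_1$-slice is merely an arc joining $(\sqrt{3}\,x_0, z^*)$ to $(4 - \sqrt{3}\,x_0, z^*)$, where $z^* = 1 - \sqrt{1 - 4(1-x_0)^2}$. The cross-section is closed off by portions of the handle: handle piece 1 contributes an upper semicircle of radius $\sigma$ at $(2, 1)$, and handle pieces 2, 3 \eqref{eq:backpack_bottom_base}--\eqref{eq:backpack_top_base}, being parts of the torus $(\sqrt{x^2 + y^2} - 2)^2 + (z - 1)^2 = 1$ of revolution about the $z$-axis, supply the side arcs at $z \in [z^*, 1]$ joining the endpoints of handle piece 1 smoothly to the $S_1$-arc endpoints (one checks that $(\sqrt{3}\,x_0, z^*)$ lies on handle piece 2). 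The area $A_2(x_0) = \int w(z)\,dz$ is then assembled from the cross-sectional width $w(z)$, described piecewise on $[1 - \rho, z^*]$, $[z^*, 1]$, and $[1, 1 + \sigma]$.

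Finally, $\int_0^{1/2} A_1\,dx_0$ is evaluated by the substitution $x_0 = 1 - \cos\alpha$ (so $\rho = \sin\alpha$), reducing the integrand to a trigonometric polynomial. For $\int_{1/2}^1 A_2\,dx_0$, similar substitutions handle the upper half-disk term ($x_0 = \sin\beta$, $\sigma = 2 - \cos\beta$) and the bottom term. The main technical obstacle is the middle contribution, a double integral $\int_{1/2}^1 \int_{z^*}^1 \sqrt{(2 - \sqrt{1 - (z-1)^2})^2 - x_0^2}\,dz\,dx_0$ whose integrand is algebraic of degree four. Exchanging the order of integration recasts it as a double integral over the region inside the solid torus $(\sqrt{x^2 + y^2} - 2)^2 + (z - 1)^2 \le 1$; since this is a solid of revolution about the $z$-axis, Pappus's centroid theorem (as in \eqref{eq:Petrunin_Volume}) evaluates the relevant volume. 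Combining everything and doubling gives the claimed value.
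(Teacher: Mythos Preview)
Your slicing strategy is reasonable, but you have misidentified which region is the cork: $C$ lies on the \emph{outside} of $S_1\cup S_2$ (and inside the handle), not on the inside. For $x_0\in[0,\tfrac12]$ you take the area \emph{enclosed} by the $S_1$-curve, but the cork cross-section is the complementary region between that curve and the handle. Concretely, at $x_0=0$ the handle slice is the full unit circle $(y-2)^2+(z-1)^2=1$ (the backpack gives the upper half, the two base tori give the lower half), while $S_1$ and $S_2$ both degenerate to the upper semicircle of radius $r_1$ together with its diameter --- precisely the ``thickness-$0$ semicircular membrane''. The cork cross-section at $x_0=0$ is therefore the unit disk minus the upper half-disk of radius $r_1$, of area $\pi-\tfrac{\pi}{2}r_1^{2}\approx 3.03$, not your $A_1(0)=\tfrac{\pi}{2}r_1^{2}\approx 0.113$. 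The same reversal occurs for $x_0\in(\tfrac12,1]$: the $S_1$-arc sits \emph{above} the cork (as the ceiling), with the base tori and the plane $z=0$ forming the floor; your $A_2$ instead measures the region above the $S_1$-arc and below the backpack semicircle. A quick sanity check confirms the problem: your $2\int_0^{1/2}A_1\,dx_0$ alone already exceeds $2$, far more than the target $1.129$.

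The paper avoids these complications by splitting at $z=1$ rather than slicing in $x$. The top half ($z\ge 1$) is exactly one half of the solid of revolution of the deltoid between the three mutually tangent unit circles (generators of the backpack, $S_1$-torus, and $S_2$-torus) about their common axis $\{y=2,\,z=1\}$, so Pappus gives its volume directly. The bottom half ($0\le z\le 1$) is handled by subtracting four elementary pieces (a cylinder wedge, a $\tfrac{1}{12}$-sphere, a pie slice, and a $\tfrac{1}{12}$ of a toroidal solid) from a polytope; each piece has a closed-form volume, and no Fubini swap or algebraic double integral is needed.
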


\begin{figure}
    \centering
    \begin{tikzpicture}[scale=2]

  \draw[dashed] (0,-0.5) -- (0,0.5) node[above] {$y$};

  \coordinate (A) at (2,0);
  \coordinate (B) at ({2-sqrt(3)},1);
  \coordinate (C) at ({2-sqrt(3)},-1);
  \coordinate (O) at ({2-sqrt(3)/3},-1);

  \coordinate (P1) at ({2-sqrt(3)},0);
  \coordinate (P2) at ({1-sqrt(3)/2 + 1},0.5);
  \coordinate (P3) at ({1-sqrt(3)/2 + 1},-0.5);

  \coordinate (O) at ({2 - 2*sqrt(3)/3},0);

  \draw[thick] (A) ++(150:1) arc[start angle=150,end angle=210,radius=1];
  \draw[thick] (B) ++(-30:1) arc[start angle=-30,end angle=-90,radius=1];
  \draw[thick] (C) ++(90:1) arc[start angle=90,end angle=30,radius=1];

  \fill (P1) circle (0.02) node[above left, xshift=5pt] {$P_1$};
  \fill (P2) circle (0.02) node[above right] {$P_2$};
  \fill (P3) circle (0.02) node[below right] {$P_3$};

  \fill (O) circle (0.02) node[above, xshift=3pt] {$O$};
\end{tikzpicture}
    \caption{The top half of the cork is congruent to half of the solid of revolution obtained by revolving the deltoid between the three mutually tangent unit circles centered at $(r_1, 1), (r_1, -1)$ and $(2, 0)$ about the $y$-axis.}
    \label{fig:cork_revolution}
\end{figure}
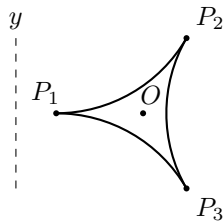

\begin{proof}
We divide $C$ into two parts, one with $1 \le z \le 2$, and the other with $0 \le z \le 1$. The first part is bound by $z=1$ and the three toroidal patches \eqref{eq:backpack}, \eqref{eq:cork_surface_torus}, and the reflection of $\eqref{eq:cork_surface_torus}$ across $x=0.$ As the toroidal patches have a common major axis, this is just half of the solid of revolution given by revolving the figure \ref{fig:cork_revolution} around the marked axis (Petrunin referred to this solid of revolution as the ``optimal cork" \cite{petruninsMOQuestion}). By Pappus's centroid theorem, the volume of this part is then $$
\pi \left(2 - \frac{2}{\sqrt{3}}\right) \left(\sqrt{3} - \frac{\pi}{2} \right) \approx 0.428.$$

For the bottom half, we focus on the region contained in the polytope cut out by $0 \le x \le 1$, $0 \le y \le 2,$ $0 \le z \le 1,$ and $\sqrt{3} x < y,$ as the whole bottom half consists of $4$ such regions (the cork is symmetric about both of the planes $y=2$ and  $x=0$). The volume of the cork within this polytope is equal to the volume of the polytope, which is $2 - \frac{\sqrt{3}}{2}$, minus the following four volumes:
\begin{itemize}
    \item The pie slice of points above the cylindrical surface, cut out by $$
\left(x-1\right)^{2}+\left(z-1\right)^{2}\le1;\:0\le x\le1;\:\sqrt{3}\le y\le2;\:0\le z\le1
$$
of volume $\frac{\pi}{4} (2 - \sqrt{3})$
\item The $\frac{1}{12}$th of a sphere above the spherical surface, cut out by $$
\left(x-1\right)^{2}+\left(z-1\right)^{2}+\left(y-\sqrt{3}\right)^{2}\le1;\:z\le1;\:y\le\sqrt{3};\:\sqrt{3}x\le y
$$
of volume $\frac{\pi}{9}$
\item The pie slice of points close to the origin cut out by $$
x^{2}+y^{2}\le1; \:\sqrt{3}x\le y;\:0\le x;\:1\ge z\ge0
$$
of volume $\frac{\pi}{12}$
\item The $\frac{1}{12}$th of a solid of revolution below the handle, cut out by \[
\left(\sqrt{x^{2}+y^{2}}-2\right)^{2}+\left(z-1\right)^{2}\ge1^{2};\]\[\:\sqrt{3}x\le y;\:0\le x;\:1\le x^{2}+y^{2}\le4;\:1\ge z\ge0
\]
which has volume $\frac{\pi}{6}(1 + \frac{10 - 3 \pi}{12 - 3\pi})(1-\frac{\pi}{4}) = \frac{1}{36} (11 - 3 \pi) \pi$.
\end{itemize}
Thus the bottom half of the cork has volume $$
4\left(\left(2 - \frac{\sqrt{3}}{2}\right) - \frac{\pi}{4} (2 - \sqrt{3}) - \frac{\pi}{9} - \frac{\pi}{12} - \frac{1}{36} (11 - 3 \pi) \pi \right) \approx 0.701.
$$
Combining the volumes for the top and bottom of the cork yields the claimed expression for the volume.
\end{proof}

\begin{remark}
    The shape of the cork and handle combination has not been optimized for volume, for example one can depress parts of the plane $z=0$ at the base of the handle into the cork to shave off some volume.
\end{remark}

\section{The final assembly}

The limiting member of our family is now obtained by plugging both handles of the modified genus $2$ example with the corks, resulting in the limiting body of Figure~\ref{fig:limiting_genus_0} (compare with the genus $2$ body of Figure~\ref{fig:squeezing}).
\begin{theorem}
There exist bodies in $\mathbb R^3$ bounded by a smooth topological sphere with principal curvatures in $[-1,1]$, and volumes arbitrarily close to \[16 - 4\sqrt 3  + \left(10 \sqrt 3 - 14\right) \pi - \left(\frac{10}{3} - \sqrt 3\right) \pi^2 \approx 3.70\]
\end{theorem}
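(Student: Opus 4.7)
The plan is to assemble the final counterexample from the pieces built in Sections~2 and 3, verify the resulting surface is a topological sphere with the required curvature bounds, compute its volume, and then thicken and smooth.

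First I would take the limiting modified fishbowl of Figure~\ref{fig:squeezing} and plug its upper handle (described by \eqref{eq:backpack}--\eqref{eq:back}) with the cork $C$ constructed in Section~\ref{section:cork}. By the symmetry arranged in Section~2, the lower handle is congruent to the reflection of the upper handle through the origin, so plugging it with $-C$ is automatic and compatible. The curvature condition is inherited: every constituent surface piece (toroidal, spherical, cylindrical, or planar) has already been checked to have principal curvatures in $[-1,1]$, and gluing along common tangent curves preserves this once we verify $C^1$ matching along the shared boundaries. This matching has already been done by the SageMath verification script for both the modification of the fishbowl and the attachment of the cork.

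Next I would verify genus zero. Rather than visualizing, I would compute the Euler characteristic of the limiting surface by triangulating the finite list of patches described in the Appendix and counting vertices, edges, and faces (or, equivalently, summing $2 - 2g$ contributions from each closed topological component after keeping track of how the thickness-zero membranes inside the corks become two parallel sheets separated by $2\varepsilon$ once we thicken). The semicircular membrane in each cork, which appears as a single sheet in the limit, becomes a disk once thickened by $\varepsilon$, exactly as in the ``thin parts" of Petrunin's fishbowl. Because the two corks and the modified fishbowl are each topological disks glued along circle boundaries, the assembled thickened object is a connected sum of spheres along disks, giving $\chi = 2$.

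The volume computation is then just addition. From equation~\eqref{eq:Petrunin_Volume} the fat part of the modified fishbowl contributes $2\pi(2-\tfrac{\sqrt{3}}{3})(\sqrt{3}-\tfrac{\pi}{2})$; the two corks contribute $2$ times the expression of Lemma~\ref{lem:cork_volume}. Expanding the sum and collecting powers of $\pi$ produces exactly $16 - 4\sqrt{3} + (10\sqrt{3} - 14)\pi - (\tfrac{10}{3} - \sqrt{3})\pi^2$ for the limiting body. To obtain \emph{smooth} examples, I would invoke the thickening-and-smoothing argument already flagged in the introduction: the Appendix gives a $C^1$ piecewise parameterization for each $\varepsilon > 0$, the curvature bound is preserved strictly in the interiors of each smooth piece, and a standard mollification in a neighborhood of the $C^1$ seams (scaled so that it modifies the surface by at most $o(\varepsilon)$) yields a $C^\infty$ sphere whose principal curvatures are still in $[-1,1]$ and whose enclosed volume differs from the limiting value by $O(\varepsilon)$. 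Taking $\varepsilon \to 0$ gives volumes arbitrarily close to the stated number.

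The only step I would expect to require care is the $C^1$ compatibility between the cork and the modified handle across the several distinct gluing curves, together with confirming that the smoothing does not push any principal curvature outside $[-1,1]$. Both are handled by the auxiliary SageMath verification and by the fact that the $C^1$ seams lie along curves where both adjoining surfaces already share the same tangent plane and have principal curvatures strictly inside $(-1,1)$ transverse to the seam, leaving slack for a mollifier to work in.
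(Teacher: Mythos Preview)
Your overall approach is the paper's: assemble the limiting body from the modified fishbowl and the two corks, sum the volumes from \eqref{eq:Petrunin_Volume} and Lemma~\ref{lem:cork_volume} to get the stated expression, defer the curvature and $C^1$ checks to the SageMath script, and confirm $\chi=2$.

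The gap is in your smoothing argument. You assert that mollification near the seams keeps the principal curvatures in $[-1,1]$ because the adjoining pieces ``have principal curvatures strictly inside $(-1,1)$ transverse to the seam, leaving slack for a mollifier to work in.'' This is false: the unit-sphere pieces of the cork have both principal curvatures identically equal to $1$, and every torus of minor radius $1$ (which accounts for most of the construction) has one principal curvature identically $1$. There is no slack, and a mollifier will generically push some curvature past $1$. The paper's remedy is to accept that thickening by $\varepsilon$ already produces curvatures up to $1/(1-\varepsilon)$, then mollify using the $C^{1,1}$ bound to keep curvatures in $[-1-3\varepsilon,\,1+3\varepsilon]$, and finally \emph{rescale the entire body} by the factor $1+3\varepsilon$ to restore the $[-1,1]$ bound. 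This rescaling step is what you are missing; it costs only $O(\varepsilon)$ in volume, so the conclusion survives, but without it your argument does not close. (Your topology sketch is also off: the thickened modified fishbowl is a genus-$2$ surface, not a disk, and plugging each handle with a cork is surgery attaching two disks to kill a handle, not a ``connected sum of spheres along disks.'' The paper sidesteps this by citing the SageMath count $V-E+F=100-208+110=2$ directly.)
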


\begin{proof}
    The part of the limiting example of Figure~\ref{fig:limiting_genus_0} containing non-zero volume --- isolated in Figure~\ref{fig:genus_0_volume} --- consists of the corks $C$ and $-C$, whose volume we computed in Lemma~\ref{lem:cork_volume}, and the washer with volume given by \eqref{eq:Petrunin_Volume}. Combining yields the claimed expression.

    The curvature bound follows for the limiting example by construction: All of our surfaces are either a toroidal patch arising from the solids of revolution discussed in the previous two sections, or are parts of planes; cylinders of radius at least one; spheres of radius one; or tori of major radius at least two and minor radius one. The SageMath script in the auxiliary files of the arXiv submission confirms this symbolically, along with the fact that the surfaces are glued together in a $C^1$ fashion.

    To obtain a true example, we must fatten the thin region to be truly $3$-dimensional and make the surface of the result smooth. To do this, one considers the set of all points of distance $0.5 > \varepsilon > 0$ to the limiting example, resulting in a surface with principal curvatures in $[\tfrac{-1}{1- \varepsilon}, \tfrac{1}{1- \varepsilon}] \subseteq [1 - 2 \varepsilon, 1 + 2 \varepsilon]$. A standard mollification argument (using that being $C^1$ with bounded principal curvatures implies being $C^{1,1}$) allows us to approximate this thickening by a smooth surface while keeping the principal curvatures in $[-1-3 \varepsilon, 1 + 3\varepsilon]$. Scaling by $1+ 3\varepsilon$ so that the principal curvatures of the result lie in $[-1,1]$, we obtain a smooth example.  By choosing $\varepsilon$ arbitrarily small, we thus obtain smooth examples having volume arbitrarily close to the volume of the limiting example of Figure~\ref{fig:limiting_genus_0}. 
    
    The SageMath script also confirms that the surface of the body consists of a single connected component of Euler characteristic $2$ and without boundary, so it is indeed a sphere. One can also see this from the depiction in Figure~\ref{fig:genus_0_epsilon_half} by imagining pulling the red surfaces out through the passage where they join to the blue ones.
\end{proof}
\begin{figure}
    \centering
    \includegraphics[width=0.5\linewidth]{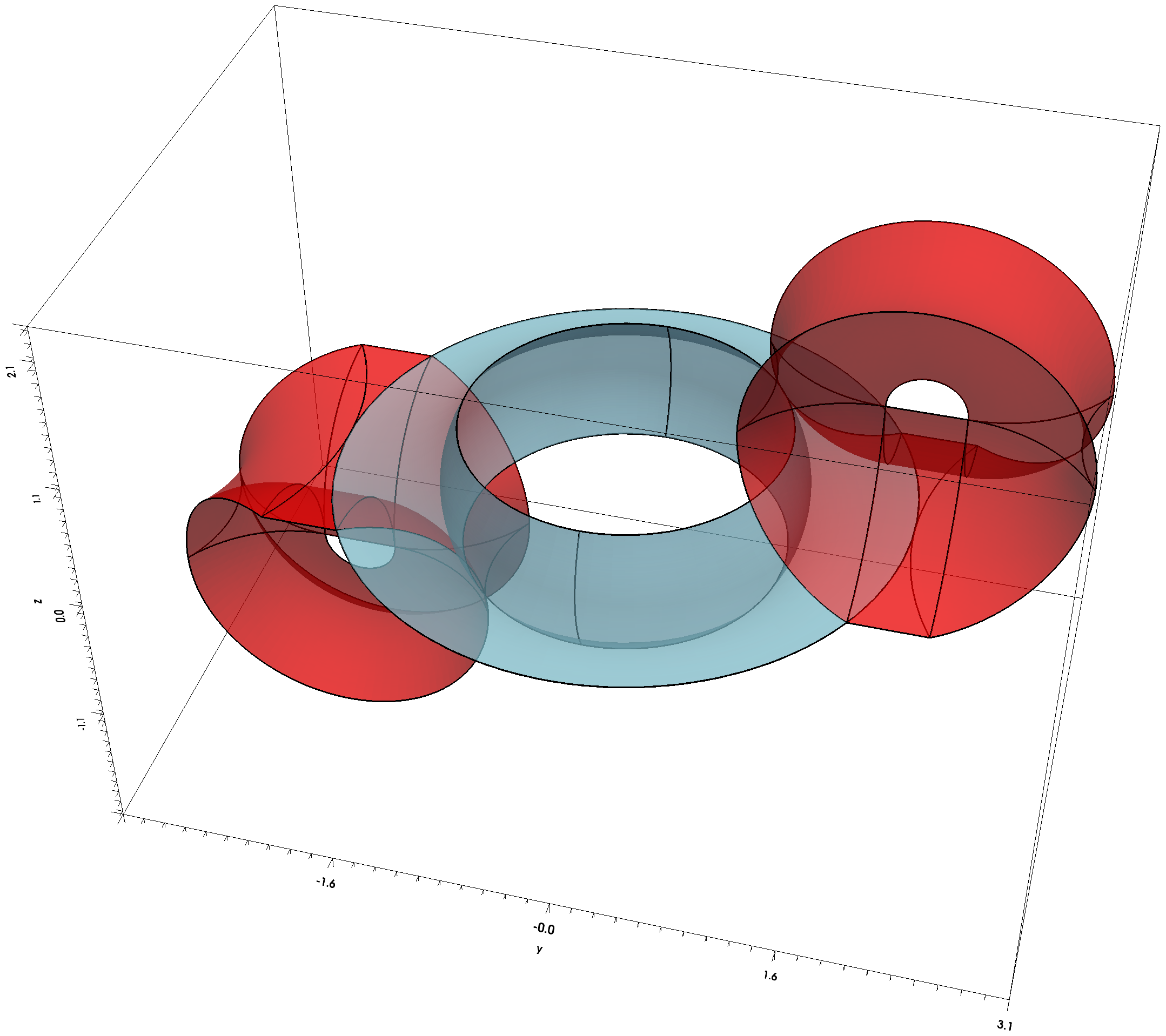}
    \caption{The only parts of the limiting object having non-zero volume, the two red corks and blue washer. Note that the corks and washer are connected to each other, as a part of the handles we glued the corks into borders the washer.}
    \label{fig:genus_0_volume}
\end{figure}
\appendix
\section{Appendix}

We list all surfaces involved in the final construction after thickening by $\varepsilon$ and rescaling by $\frac{1}{1 - \varepsilon}$ so that the principal curvatures lie in $[-1, 1]$ (but we have not done any mollification). Many surfaces have been subdivided into smaller pieces than is necessary for a human understanding of the example --- this is to make the computer verification easier by avoiding edges where one surface glues to several others. The use of the equations describing the thickened body and not just the limiting one is to remove some ambiguity in how the surfaces should glue together, as in the limit some surfaces become identical. Edges of a surface in the list always correspond to a single extreme values of one of the parameters, but some surfaces have fewer than four edges as they are constant at one or more of the extreme values. In total there are $110$ surfaces, $208$ edges, and $100$ vertices.

This list is also present within the SageMath script contained in the auxiliary materials associated to the arXiv submission.

{\tiny
\setlength{\tabcolsep}{1pt}
\begin{longtable}{c c c c c c}
\caption{{\normalsize List of all surfaces in the parametric form $f(u,v)$ for $u_0 \le u \le u_1$ and $v_0 \le v \le v_1.$} We also set $r = 2 - \sqrt{3}$.} \\
\toprule
{\normalsize ID} &{\normalsize $f(u,v)$} & {\normalsize $u_{0}$} & {\normalsize $u_{1}$} & {\normalsize $v_{0}$} & {\normalsize $v_{1}$} \\
\midrule
\endfirsthead
\toprule
Surface & Equation & $u_{\min}$ & $u_{\max}$ & $v_{\min}$ & $v_{\max}$ \\
\midrule
\endhead
0 & $ \left(\frac{{\left(\mathit{\varepsilon} - 1\right)} \cos\left(u\right) \cos\left(v\right) - 1}{\mathit{\varepsilon} - 1}, \frac{{\left(\mathit{\varepsilon} - 1\right)} \cos\left(u\right) \sin\left(v\right) + \sqrt{3}}{\mathit{\varepsilon} - 1}, \frac{{\left(\mathit{\varepsilon} - 1\right)} \sin\left(u\right) + 1}{\mathit{\varepsilon} - 1}\right) $ & $0$ & $\frac{1}{2} \pi$ & $\frac{2}{3} \pi$ & $\pi$ \\
1 & $ \left(-\frac{{\left(\mathit{\varepsilon} - 1\right)} \cos\left(u\right) \cos\left(v\right) - 1}{\mathit{\varepsilon} - 1}, \frac{{\left(\mathit{\varepsilon} - 1\right)} \cos\left(u\right) \sin\left(v\right) + \sqrt{3}}{\mathit{\varepsilon} - 1}, \frac{{\left(\mathit{\varepsilon} - 1\right)} \sin\left(u\right) + 1}{\mathit{\varepsilon} - 1}\right) $ & $0$ & $\frac{1}{2} \pi$ & $\frac{2}{3} \pi$ & $\pi$ \\
2 & $ \left(\frac{{\left(\mathit{\varepsilon} - 1\right)} \cos\left(v\right) - 1}{\mathit{\varepsilon} - 1}, -\frac{u - 2}{\mathit{\varepsilon} - 1}, \frac{{\left(\mathit{\varepsilon} - 1\right)} \sin\left(v\right) + 1}{\mathit{\varepsilon} - 1}\right) $ & $-r$ & $r$ & $\frac{1}{2} \pi$ & $\pi$ \\
3 & $ \left(-\frac{{\left(\mathit{\varepsilon} - 1\right)} \cos\left(v\right) - 1}{\mathit{\varepsilon} - 1}, -\frac{u - 2}{\mathit{\varepsilon} - 1}, \frac{{\left(\mathit{\varepsilon} - 1\right)} \sin\left(v\right) + 1}{\mathit{\varepsilon} - 1}\right) $ & $-r$ & $r$ & $\frac{1}{2} \pi$ & $\pi$ \\
4 & $ \left(\frac{{\left(\mathit{\varepsilon} - 1\right)} \cos\left(u\right) \cos\left(v\right) - 1}{\mathit{\varepsilon} - 1}, \frac{{\left(\mathit{\varepsilon} - 1\right)} \cos\left(u\right) \sin\left(v\right) - \sqrt{3} + 4}{\mathit{\varepsilon} - 1}, \frac{{\left(\mathit{\varepsilon} - 1\right)} \sin\left(u\right) + 1}{\mathit{\varepsilon} - 1}\right) $ & $0$ & $\frac{1}{2} \pi$ & $\pi$ & $\frac{4}{3} \pi$ \\
5 & $ \left(-\frac{{\left(\mathit{\varepsilon} - 1\right)} \cos\left(u\right) \cos\left(v\right) - 1}{\mathit{\varepsilon} - 1}, \frac{{\left(\mathit{\varepsilon} - 1\right)} \cos\left(u\right) \sin\left(v\right) - \sqrt{3} + 4}{\mathit{\varepsilon} - 1}, \frac{{\left(\mathit{\varepsilon} - 1\right)} \sin\left(u\right) + 1}{\mathit{\varepsilon} - 1}\right) $ & $0$ & $\frac{1}{2} \pi$ & $\pi$ & $\frac{4}{3} \pi$ \\
6 & $ \left(\frac{{\left(\mathit{\varepsilon} - 1\right)} \cos\left(v\right) - 1}{\mathit{\varepsilon} - 1}, \frac{{\left({\left(\mathit{\varepsilon} - 1\right)} \sin\left(v\right) - r\right)} \cos\left(u\right) + 2}{\mathit{\varepsilon} - 1}, \frac{{\left({\left(\mathit{\varepsilon} - 1\right)} \sin\left(v\right) - r\right)} \sin\left(u\right) + 1}{\mathit{\varepsilon} - 1}\right) $ & $\pi$ & $2 \pi$ & $\frac{2}{3} \pi$ & $\pi$ \\
7 & $ \left(-\frac{{\left(\mathit{\varepsilon} - 1\right)} \cos\left(v\right) - 1}{\mathit{\varepsilon} - 1}, \frac{{\left({\left(\mathit{\varepsilon} - 1\right)} \sin\left(v\right) - r\right)} \cos\left(u\right) + 2}{\mathit{\varepsilon} - 1}, \frac{{\left({\left(\mathit{\varepsilon} - 1\right)} \sin\left(v\right) - r\right)} \sin\left(u\right) + 1}{\mathit{\varepsilon} - 1}\right) $ & $\pi$ & $2 \pi$ & $\frac{2}{3} \pi$ & $\pi$ \\
8 & $ \left(-\frac{\mathit{\varepsilon}}{\mathit{\varepsilon} - 1}, -\frac{r \cos\left(v\right) - 2}{\mathit{\varepsilon} - 1}, \frac{r u \sin\left(v\right) + 1}{\mathit{\varepsilon} - 1}\right) $ & $0$ & $1$ & $0$ & $\pi$ \\
9 & $ \left(\frac{\mathit{\varepsilon}}{\mathit{\varepsilon} - 1}, -\frac{r \cos\left(v\right) - 2}{\mathit{\varepsilon} - 1}, \frac{r u \sin\left(v\right) + 1}{\mathit{\varepsilon} - 1}\right) $ & $0$ & $1$ & $0$ & $\pi$ \\
10 & $ \left(-\frac{{\left(\mathit{\varepsilon} + 1\right)} \cos\left(v\right)}{\mathit{\varepsilon} - 1}, -\frac{{\left({\left(\mathit{\varepsilon} + 1\right)} \sin\left(v\right) + 2\right)} \cos\left(u\right) - 2}{\mathit{\varepsilon} - 1}, -\frac{{\left({\left(\mathit{\varepsilon} + 1\right)} \sin\left(v\right) + 2\right)} \sin\left(u\right) - 1}{\mathit{\varepsilon} - 1}\right) $ & $\pi$ & $2 \pi$ & $\frac{5}{3} \pi$ & $2 \pi$ \\
11 & $ \left(\frac{{\left({\left(\mathit{\varepsilon} - 1\right)} \sin\left(v\right) - 2\right)} \cos\left(u\right)}{\mathit{\varepsilon} - 1}, \frac{{\left({\left(\mathit{\varepsilon} - 1\right)} \sin\left(v\right) - 2\right)} \sin\left(u\right) + 4}{\mathit{\varepsilon} - 1}, \frac{{\left(\mathit{\varepsilon} - 1\right)} \cos\left(v\right) + 1}{\mathit{\varepsilon} - 1}\right) $ & $0$ & $\frac{1}{3} \pi$ & $-\frac{1}{2} \pi$ & $0$ \\
12 & $ \left(\frac{{\left({\left(\mathit{\varepsilon} - 1\right)} \sin\left(v\right) - 2\right)} \cos\left(u\right)}{\mathit{\varepsilon} - 1}, \frac{{\left({\left(\mathit{\varepsilon} - 1\right)} \sin\left(v\right) - 2\right)} \sin\left(u\right) + 4}{\mathit{\varepsilon} - 1}, \frac{{\left(\mathit{\varepsilon} - 1\right)} \cos\left(v\right) + 1}{\mathit{\varepsilon} - 1}\right) $ & $\frac{2}{3} \pi$ & $\pi$ & $-\frac{1}{2} \pi$ & $0$ \\
13 & $ \left(-\frac{{\left(\mathit{\varepsilon} + 1\right)} \cos\left(v\right)}{\mathit{\varepsilon} - 1}, -\frac{{\left({\left(\mathit{\varepsilon} + 1\right)} \sin\left(v\right) + 2\right)} \cos\left(u\right) - 2}{\mathit{\varepsilon} - 1}, -\frac{{\left({\left(\mathit{\varepsilon} + 1\right)} \sin\left(v\right) + 2\right)} \sin\left(u\right) - 1}{\mathit{\varepsilon} - 1}\right) $ & $\pi$ & $2 \pi$ & $\pi$ & $\frac{4}{3} \pi$ \\
14 & $ \left(-\frac{{\left(\mathit{\varepsilon} + 1\right)} \cos\left(u\right)}{\mathit{\varepsilon} - 1}, -\frac{{\left(\mathit{\varepsilon} + 1\right)} \sin\left(u\right)}{\mathit{\varepsilon} - 1}, -\frac{v}{\mathit{\varepsilon} - 1}\right) $ & $0$ & $\pi$ & $-3$ & $-1$ \\
15 & $ \left(\frac{{\left(\mathit{\varepsilon} - 1\right)} \cos\left(u\right) + 2}{\mathit{\varepsilon} - 1}, -\frac{v}{\mathit{\varepsilon} - 1}, \frac{{\left(\mathit{\varepsilon} - 1\right)} \sin\left(u\right) + 1}{\mathit{\varepsilon} - 1}\right) $ & $\frac{1}{2} \pi$ & $\pi$ & $0$ & $4$ \\
16 & $ \left(\frac{{\left(\mathit{\varepsilon} - 1\right)} \cos\left(u\right) + 2}{\mathit{\varepsilon} - 1}, -\frac{v}{\mathit{\varepsilon} - 1}, \frac{{\left(\mathit{\varepsilon} - 1\right)} \sin\left(u\right) + 1}{\mathit{\varepsilon} - 1}\right) $ & $\frac{1}{2} \pi$ & $\pi$ & $-4$ & $0$ \\
17 & $ \left(\frac{{\left(\mathit{\varepsilon} - 1\right)} \cos\left(u\right) - 2}{\mathit{\varepsilon} - 1}, -\frac{v}{\mathit{\varepsilon} - 1}, \frac{{\left(\mathit{\varepsilon} - 1\right)} \sin\left(u\right) + 1}{\mathit{\varepsilon} - 1}\right) $ & $0$ & $\frac{1}{2} \pi$ & $0$ & $4$ \\
18 & $ \left(\frac{{\left(\mathit{\varepsilon} - 1\right)} \cos\left(u\right) - 2}{\mathit{\varepsilon} - 1}, -\frac{v}{\mathit{\varepsilon} - 1}, \frac{{\left(\mathit{\varepsilon} - 1\right)} \sin\left(u\right) + 1}{\mathit{\varepsilon} - 1}\right) $ & $0$ & $\frac{1}{2} \pi$ & $-4$ & $0$ \\
19 & $ \left(\frac{{\left(\mathit{\varepsilon} - 1\right)} \cos\left(u\right) + 2}{\mathit{\varepsilon} - 1}, -\frac{v}{\mathit{\varepsilon} - 1}, \frac{{\left(\mathit{\varepsilon} - 1\right)} \sin\left(u\right) + 3}{\mathit{\varepsilon} - 1}\right) $ & $\pi$ & $\frac{3}{2} \pi$ & $0$ & $4$ \\
20 & $ \left(\frac{{\left(\mathit{\varepsilon} - 1\right)} \cos\left(u\right) + 2}{\mathit{\varepsilon} - 1}, -\frac{v}{\mathit{\varepsilon} - 1}, \frac{{\left(\mathit{\varepsilon} - 1\right)} \sin\left(u\right) + 3}{\mathit{\varepsilon} - 1}\right) $ & $\pi$ & $\frac{3}{2} \pi$ & $-4$ & $0$ \\
21 & $ \left(\frac{{\left(\mathit{\varepsilon} - 1\right)} \cos\left(u\right) - 2}{\mathit{\varepsilon} - 1}, -\frac{v}{\mathit{\varepsilon} - 1}, \frac{{\left(\mathit{\varepsilon} - 1\right)} \sin\left(u\right) + 3}{\mathit{\varepsilon} - 1}\right) $ & $\frac{3}{2} \pi$ & $2 \pi$ & $0$ & $4$ \\
22 & $ \left(\frac{{\left(\mathit{\varepsilon} - 1\right)} \cos\left(u\right) - 2}{\mathit{\varepsilon} - 1}, -\frac{v}{\mathit{\varepsilon} - 1}, \frac{{\left(\mathit{\varepsilon} - 1\right)} \sin\left(u\right) + 3}{\mathit{\varepsilon} - 1}\right) $ & $\frac{3}{2} \pi$ & $2 \pi$ & $-4$ & $0$ \\
23 & $ \left(\frac{{\left({\left(\mathit{\varepsilon} - 1\right)} \sin\left(u\right) - 2\right)} \cos\left(v\right)}{\mathit{\varepsilon} - 1}, \frac{{\left({\left(\mathit{\varepsilon} - 1\right)} \sin\left(u\right) - 2\right)} \sin\left(v\right) - 4}{\mathit{\varepsilon} - 1}, \frac{{\left(\mathit{\varepsilon} - 1\right)} \cos\left(u\right) + 1}{\mathit{\varepsilon} - 1}\right) $ & $0$ & $\frac{1}{2} \pi$ & $0$ & $\pi$ \\
24 & $ \left(\frac{{\left({\left(\mathit{\varepsilon} - 1\right)} \sin\left(u\right) - 2\right)} \cos\left(v\right)}{\mathit{\varepsilon} - 1}, \frac{{\left({\left(\mathit{\varepsilon} - 1\right)} \sin\left(u\right) - 2\right)} \sin\left(v\right) - 4}{\mathit{\varepsilon} - 1}, \frac{{\left(\mathit{\varepsilon} - 1\right)} \cos\left(u\right) + 3}{\mathit{\varepsilon} - 1}\right) $ & $\frac{1}{2} \pi$ & $\pi$ & $0$ & $\pi$ \\
25 & $ \left(\frac{{\left({\left(\mathit{\varepsilon} - 1\right)} \sin\left(v\right) - 2\right)} \cos\left(u\right)}{\mathit{\varepsilon} - 1}, \frac{{\left({\left(\mathit{\varepsilon} - 1\right)} \sin\left(v\right) - 2\right)} \sin\left(u\right)}{\mathit{\varepsilon} - 1}, \frac{{\left(\mathit{\varepsilon} - 1\right)} \cos\left(v\right) + 3}{\mathit{\varepsilon} - 1}\right) $ & $0$ & $\pi$ & $\pi$ & $\frac{3}{2} \pi$ \\
26 & $ \left(\frac{{\left(\mathit{\varepsilon} - 3\right)} \cos\left(u\right)}{\mathit{\varepsilon} - 1}, \frac{{\left(\mathit{\varepsilon} - 3\right)} \sin\left(u\right) - 4}{\mathit{\varepsilon} - 1}, -\frac{v}{\mathit{\varepsilon} - 1}\right) $ & $0$ & $\pi$ & $-3$ & $-1$ \\
27 & $ \left(\frac{{\left(\mathit{\varepsilon} - 1\right)} \cos\left(u\right) + 2}{\mathit{\varepsilon} - 1}, -\frac{v}{\mathit{\varepsilon} - 1}, \frac{{\left(\mathit{\varepsilon} - 1\right)} \sin\left(u\right) + 3}{\mathit{\varepsilon} - 1}\right) $ & $\frac{3}{2} \pi$ & $2 \pi$ & $-4$ & $0$ \\
28 & $ \left(\frac{{\left(\mathit{\varepsilon} - 1\right)} \cos\left(u\right) - 2}{\mathit{\varepsilon} - 1}, -\frac{v}{\mathit{\varepsilon} - 1}, \frac{{\left(\mathit{\varepsilon} - 1\right)} \sin\left(u\right) + 3}{\mathit{\varepsilon} - 1}\right) $ & $\pi$ & $\frac{3}{2} \pi$ & $-4$ & $0$ \\
29 & $ \left(\frac{{\left(\mathit{\varepsilon} - 1\right)} \cos\left(u\right) + 2}{\mathit{\varepsilon} - 1}, \frac{{\left(\mathit{\varepsilon} - 1\right)} \sin\left(u\right) + 4}{\mathit{\varepsilon} - 1}, -\frac{v}{\mathit{\varepsilon} - 1}\right) $ & $\pi$ & $2 \pi$ & $-3$ & $-1$ \\
30 & $ \left(\frac{{\left(\mathit{\varepsilon} - 1\right)} \cos\left(u\right) - 2}{\mathit{\varepsilon} - 1}, \frac{{\left(\mathit{\varepsilon} - 1\right)} \sin\left(u\right) + 4}{\mathit{\varepsilon} - 1}, -\frac{v}{\mathit{\varepsilon} - 1}\right) $ & $\pi$ & $2 \pi$ & $-3$ & $-1$ \\
31 & $ \left(\frac{{\left(\mathit{\varepsilon} - 1\right)} \cos\left(u\right) \cos\left(v\right) - 2}{\mathit{\varepsilon} - 1}, \frac{{\left(\mathit{\varepsilon} - 1\right)} \cos\left(u\right) \sin\left(v\right) + 4}{\mathit{\varepsilon} - 1}, \frac{{\left(\mathit{\varepsilon} - 1\right)} \sin\left(u\right) + 1}{\mathit{\varepsilon} - 1}\right) $ & $0$ & $\frac{1}{2} \pi$ & $\pi$ & $2 \pi$ \\
32 & $ \left(\frac{{\left(\mathit{\varepsilon} - 1\right)} \cos\left(u\right) \cos\left(v\right) - 2}{\mathit{\varepsilon} - 1}, \frac{{\left(\mathit{\varepsilon} - 1\right)} \cos\left(u\right) \sin\left(v\right) + 4}{\mathit{\varepsilon} - 1}, \frac{{\left(\mathit{\varepsilon} - 1\right)} \sin\left(u\right) + 3}{\mathit{\varepsilon} - 1}\right) $ & $\frac{3}{2} \pi$ & $2 \pi$ & $\pi$ & $2 \pi$ \\
33 & $ \left(\frac{{\left(\mathit{\varepsilon} - 1\right)} \cos\left(u\right) \cos\left(v\right) + 2}{\mathit{\varepsilon} - 1}, \frac{{\left(\mathit{\varepsilon} - 1\right)} \cos\left(u\right) \sin\left(v\right) + 4}{\mathit{\varepsilon} - 1}, \frac{{\left(\mathit{\varepsilon} - 1\right)} \sin\left(u\right) + 1}{\mathit{\varepsilon} - 1}\right) $ & $0$ & $\frac{1}{2} \pi$ & $\pi$ & $2 \pi$ \\
34 & $ \left(\frac{{\left(\mathit{\varepsilon} - 1\right)} \cos\left(u\right) \cos\left(v\right) + 2}{\mathit{\varepsilon} - 1}, \frac{{\left(\mathit{\varepsilon} - 1\right)} \cos\left(u\right) \sin\left(v\right) + 4}{\mathit{\varepsilon} - 1}, \frac{{\left(\mathit{\varepsilon} - 1\right)} \sin\left(u\right) + 3}{\mathit{\varepsilon} - 1}\right) $ & $\frac{3}{2} \pi$ & $2 \pi$ & $\pi$ & $2 \pi$ \\
35 & $ \left(-\frac{\mathit{\varepsilon} - 3}{\mathit{\varepsilon} - 1}, -\frac{u}{\mathit{\varepsilon} - 1}, -\frac{v}{\mathit{\varepsilon} - 1}\right) $ & $0$ & $4$ & $-3$ & $-1$ \\
36 & $ \left(-\frac{\mathit{\varepsilon} - 3}{\mathit{\varepsilon} - 1}, -\frac{u}{\mathit{\varepsilon} - 1}, -\frac{v}{\mathit{\varepsilon} - 1}\right) $ & $-4$ & $0$ & $-3$ & $-1$ \\
37 & $ \left(\frac{\mathit{\varepsilon} - 3}{\mathit{\varepsilon} - 1}, -\frac{u}{\mathit{\varepsilon} - 1}, -\frac{v}{\mathit{\varepsilon} - 1}\right) $ & $0$ & $4$ & $-3$ & $-1$ \\
38 & $ \left(\frac{\mathit{\varepsilon} - 3}{\mathit{\varepsilon} - 1}, -\frac{u}{\mathit{\varepsilon} - 1}, -\frac{v}{\mathit{\varepsilon} - 1}\right) $ & $-4$ & $0$ & $-3$ & $-1$ \\
39 & $ \left(-\frac{2 \, \cos\left(u\right)}{\mathit{\varepsilon} - 1}, -\frac{v + 2 \, \sin\left(u\right)}{\mathit{\varepsilon} - 1}, -\frac{\mathit{\varepsilon} - 4}{\mathit{\varepsilon} - 1}\right) $ & $0$ & $\pi$ & $0$ & $4$ \\
40 & $ \left(-\frac{2 \, \cos\left(u\right)}{\mathit{\varepsilon} - 1}, -\frac{2 \, {\left(v {\left(\sin\left(u\right) - 1\right)} - 1\right)}}{\mathit{\varepsilon} - 1}, \frac{\mathit{\varepsilon}}{\mathit{\varepsilon} - 1}\right) $ & $0$ & $\frac{1}{3} \pi$ & $-1$ & $1$ \\
41 & $ \left(-\frac{2 \, \cos\left(u\right)}{\mathit{\varepsilon} - 1}, -\frac{2 \, {\left(v {\left(\sin\left(u\right) - 1\right)} - 1\right)}}{\mathit{\varepsilon} - 1}, \frac{\mathit{\varepsilon}}{\mathit{\varepsilon} - 1}\right) $ & $\frac{2}{3} \pi$ & $\pi$ & $-1$ & $1$ \\
42 & $ \left(\frac{\mathit{\varepsilon} + 1}{\mathit{\varepsilon} - 1}, -\frac{2 \, {\left(\cos\left(u\right) - 1\right)}}{\mathit{\varepsilon} - 1}, \frac{2 \, v {\left(\sin\left(u\right) - 1\right)} + 3}{\mathit{\varepsilon} - 1}\right) $ & $0$ & $\pi$ & $0$ & $1$ \\
43 & $ \left(-\frac{\mathit{\varepsilon} + 1}{\mathit{\varepsilon} - 1}, -\frac{2 \, {\left(\cos\left(u\right) - 1\right)}}{\mathit{\varepsilon} - 1}, \frac{2 \, v {\left(\sin\left(u\right) - 1\right)} + 3}{\mathit{\varepsilon} - 1}\right) $ & $0$ & $\pi$ & $0$ & $1$ \\
44 & $ \left(-\frac{2 \, \cos\left(u\right)}{\mathit{\varepsilon} - 1}, -\frac{2 \, {\left(v {\left(\sin\left(u\right) - 1\right)} + 1\right)}}{\mathit{\varepsilon} - 1}, \frac{\mathit{\varepsilon}}{\mathit{\varepsilon} - 1}\right) $ & $0$ & $\pi$ & $-1$ & $1$ \\
45 & $ \left(-\frac{{\left({\left(\mathit{\varepsilon} - 1\right)} \sin\left(v\right) - 2\right)} \cos\left(u\right)}{\mathit{\varepsilon} - 1}, \frac{{\left({\left(\mathit{\varepsilon} - 1\right)} \sin\left(v\right) - 2\right)} \sin\left(u\right)}{\mathit{\varepsilon} - 1}, \frac{{\left(\mathit{\varepsilon} - 1\right)} \cos\left(v\right) + 1}{\mathit{\varepsilon} - 1}\right) $ & $-\frac{1}{3} \pi$ & $0$ & $-\frac{1}{2} \pi$ & $0$ \\
46 & $ \left(-\frac{{\left({\left(\mathit{\varepsilon} - 1\right)} \sin\left(v\right) - 2\right)} \cos\left(u\right)}{\mathit{\varepsilon} - 1}, \frac{{\left({\left(\mathit{\varepsilon} - 1\right)} \sin\left(v\right) - 2\right)} \sin\left(u\right)}{\mathit{\varepsilon} - 1}, \frac{{\left(\mathit{\varepsilon} - 1\right)} \cos\left(v\right) + 1}{\mathit{\varepsilon} - 1}\right) $ & $0$ & $\pi$ & $-\frac{1}{2} \pi$ & $0$ \\
47 & $ \left(-\frac{{\left({\left(\mathit{\varepsilon} - 1\right)} \sin\left(v\right) - 2\right)} \cos\left(u\right)}{\mathit{\varepsilon} - 1}, \frac{{\left({\left(\mathit{\varepsilon} - 1\right)} \sin\left(v\right) - 2\right)} \sin\left(u\right)}{\mathit{\varepsilon} - 1}, \frac{{\left(\mathit{\varepsilon} - 1\right)} \cos\left(v\right) + 1}{\mathit{\varepsilon} - 1}\right) $ & $\pi$ & $\frac{4}{3} \pi$ & $-\frac{1}{2} \pi$ & $0$ \\
48 & $ \left(-\frac{{\left({\left(\mathit{\varepsilon} + 1\right)} \sin\left(v\right) + 2\right)} \cos\left(u\right)}{\mathit{\varepsilon} - 1}, -\frac{{\left({\left(\mathit{\varepsilon} + 1\right)} \sin\left(v\right) + 2\right)} \sin\left(u\right)}{\mathit{\varepsilon} - 1}, -\frac{{\left(\mathit{\varepsilon} + 1\right)} \cos\left(v\right) + 1}{\mathit{\varepsilon} - 1}\right) $ & $0$ & $\pi$ & $\frac{4}{3} \pi$ & $\frac{3}{2} \pi$ \\
49 & $ \left(-\frac{{\left({\left(\mathit{\varepsilon} + 1\right)} \sin\left(v\right) + 2\right)} \cos\left(u\right)}{\mathit{\varepsilon} - 1}, -\frac{{\left({\left(\mathit{\varepsilon} + 1\right)} \sin\left(v\right) + 2\right)} \sin\left(u\right)}{\mathit{\varepsilon} - 1}, -\frac{{\left(\mathit{\varepsilon} + 1\right)} \cos\left(v\right) + 1}{\mathit{\varepsilon} - 1}\right) $ & $\pi$ & $2 \pi$ & $\frac{4}{3} \pi$ & $\frac{3}{2} \pi$ \\
50 & $ \left(-\frac{{\left({\left(\mathit{\varepsilon} + 1\right)} \sin\left(v\right) + 2\right)} \cos\left(u\right)}{\mathit{\varepsilon} - 1}, -\frac{{\left({\left(\mathit{\varepsilon} + 1\right)} \sin\left(v\right) + 2\right)} \sin\left(u\right) + 4}{\mathit{\varepsilon} - 1}, -\frac{{\left(\mathit{\varepsilon} + 1\right)} \cos\left(v\right) + 1}{\mathit{\varepsilon} - 1}\right) $ & $\pi$ & $2 \pi$ & $\pi$ & $\frac{3}{2} \pi$ \\
51 & $ \left(-\frac{{\left({\left(\mathit{\varepsilon} + 1\right)} \sin\left(v\right) + 2\right)} \cos\left(u\right)}{\mathit{\varepsilon} - 1}, -\frac{{\left({\left(\mathit{\varepsilon} + 1\right)} \sin\left(v\right) + 2\right)} \sin\left(u\right) + 4}{\mathit{\varepsilon} - 1}, -\frac{{\left(\mathit{\varepsilon} + 1\right)} \cos\left(v\right) + 1}{\mathit{\varepsilon} - 1}\right) $ & $0$ & $\pi$ & $\pi$ & $\frac{3}{2} \pi$ \\
52 & $ \left(\cos\left(v\right), \frac{{\left({\left(\mathit{\varepsilon} - 1\right)} \sin\left(v\right) - 2\right)} \cos\left(u\right) - 2}{\mathit{\varepsilon} - 1}, \frac{{\left({\left(\mathit{\varepsilon} - 1\right)} \sin\left(v\right) - 2\right)} \sin\left(u\right) - 1}{\mathit{\varepsilon} - 1}\right) $ & $0$ & $\pi$ & $0$ & $\pi$ \\
53 & $ \left(\cos\left(v\right), \frac{{\left({\left(\mathit{\varepsilon} - 1\right)} \sin\left(v\right) - 2\right)} \cos\left(u\right) - 2}{\mathit{\varepsilon} - 1}, \frac{{\left({\left(\mathit{\varepsilon} - 1\right)} \sin\left(v\right) - 2\right)} \sin\left(u\right) - 1}{\mathit{\varepsilon} - 1}\right) $ & $0$ & $\pi$ & $\pi$ & $2 \pi$ \\
54 & $ \left(-\frac{{\left({\left(\mathit{\varepsilon} - 1\right)} \sin\left(v\right) - r\right)} \cos\left(u\right)}{\mathit{\varepsilon} - 1}, -\frac{{\left({\left(\mathit{\varepsilon} - 1\right)} \sin\left(v\right) - r\right)} \sin\left(u\right)}{\mathit{\varepsilon} - 1}, -\cos\left(v\right)\right) $ & $\pi$ & $2 \pi$ & $\frac{1}{3} \pi$ & $\frac{2}{3} \pi$ \\
55 & $ \left(-\frac{{\left({\left(\mathit{\varepsilon} - 1\right)} \sin\left(v\right) - r\right)} \cos\left(u\right)}{\mathit{\varepsilon} - 1}, -\frac{{\left({\left(\mathit{\varepsilon} - 1\right)} \sin\left(v\right) - r\right)} \sin\left(u\right)}{\mathit{\varepsilon} - 1}, -\cos\left(v\right)\right) $ & $0$ & $\pi$ & $\frac{1}{3} \pi$ & $\frac{2}{3} \pi$ \\
56 & $ \left(\cos\left(v\right), \frac{{\left({\left(\mathit{\varepsilon} - 1\right)} \sin\left(v\right) - 2\right)} \cos\left(u\right) + 2}{\mathit{\varepsilon} - 1}, \frac{{\left({\left(\mathit{\varepsilon} - 1\right)} \sin\left(v\right) - 2\right)} \sin\left(u\right) + 1}{\mathit{\varepsilon} - 1}\right) $ & $\pi$ & $2 \pi$ & $\pi$ & $2 \pi$ \\
57 & $ \left(-\frac{{\left({\left(\mathit{\varepsilon} + 1\right)} \sin\left(v\right) + 2\right)} \cos\left(u\right)}{\mathit{\varepsilon} - 1}, -\frac{{\left({\left(\mathit{\varepsilon} + 1\right)} \sin\left(v\right) + 2\right)} \sin\left(u\right) - 4}{\mathit{\varepsilon} - 1}, -\frac{{\left(\mathit{\varepsilon} + 1\right)} \cos\left(v\right) - 1}{\mathit{\varepsilon} - 1}\right) $ & $0$ & $\pi$ & $-\frac{1}{2} \pi$ & $0$ \\
58 & $ \left(\frac{{\left({\left(\mathit{\varepsilon} + 1\right)} \sin\left(v\right) + 2\right)} \cos\left(u\right)}{\mathit{\varepsilon} - 1}, -\frac{{\left({\left(\mathit{\varepsilon} + 1\right)} \sin\left(v\right) + 2\right)} \sin\left(u\right)}{\mathit{\varepsilon} - 1}, -\frac{{\left(\mathit{\varepsilon} + 1\right)} \cos\left(v\right) - 1}{\mathit{\varepsilon} - 1}\right) $ & $\pi$ & $2 \pi$ & $-\frac{1}{2} \pi$ & $-\frac{1}{3} \pi$ \\
59 & $ \left(\frac{{\left({\left(\mathit{\varepsilon} + 1\right)} \sin\left(v\right) + 2\right)} \cos\left(u\right)}{\mathit{\varepsilon} - 1}, -\frac{{\left({\left(\mathit{\varepsilon} + 1\right)} \sin\left(v\right) + 2\right)} \sin\left(u\right)}{\mathit{\varepsilon} - 1}, -\frac{{\left(\mathit{\varepsilon} + 1\right)} \cos\left(v\right) - 1}{\mathit{\varepsilon} - 1}\right) $ & $0$ & $\pi$ & $-\frac{1}{2} \pi$ & $-\frac{1}{3} \pi$ \\
60 & $ \left(\cos\left(u\right), \sin\left(u\right), -\frac{v}{\mathit{\varepsilon} - 1}\right) $ & $0$ & $\pi$ & $-3$ & $-1$ \\
61 & $ \left(-1, -\frac{2 \, {\left(\cos\left(u\right) - 1\right)}}{\mathit{\varepsilon} - 1}, \frac{2 \, v {\left(\sin\left(u\right) - 1\right)} + 3}{\mathit{\varepsilon} - 1}\right) $ & $0$ & $\pi$ & $0$ & $1$ \\
62 & $ \left(1, -\frac{2 \, {\left(\cos\left(u\right) - 1\right)}}{\mathit{\varepsilon} - 1}, \frac{2 \, v {\left(\sin\left(u\right) - 1\right)} + 3}{\mathit{\varepsilon} - 1}\right) $ & $0$ & $\pi$ & $0$ & $1$ \\
63 & $ \left(-\frac{{\left({\left(\mathit{\varepsilon} + 1\right)} \sin\left(v\right) + 2\right)} \cos\left(u\right)}{\mathit{\varepsilon} - 1}, -\frac{{\left({\left(\mathit{\varepsilon} + 1\right)} \sin\left(v\right) + 2\right)} \sin\left(u\right)}{\mathit{\varepsilon} - 1}, -\frac{{\left(\mathit{\varepsilon} + 1\right)} \cos\left(v\right) - 3}{\mathit{\varepsilon} - 1}\right) $ & $0$ & $\pi$ & $\pi$ & $\frac{3}{2} \pi$ \\
64 & $ \left(-\frac{2 \, \cos\left(u\right)}{\mathit{\varepsilon} - 1}, -\frac{2 \, {\left(v {\left(\sin\left(u\right) - 1\right)} + 1\right)}}{\mathit{\varepsilon} - 1}, -\frac{\mathit{\varepsilon}}{\mathit{\varepsilon} - 1}\right) $ & $0$ & $\frac{1}{3} \pi$ & $-1$ & $1$ \\
65 & $ \left(-\frac{2 \, \cos\left(u\right)}{\mathit{\varepsilon} - 1}, -\frac{2 \, {\left(v {\left(\sin\left(u\right) - 1\right)} + 1\right)}}{\mathit{\varepsilon} - 1}, -\frac{\mathit{\varepsilon}}{\mathit{\varepsilon} - 1}\right) $ & $\frac{2}{3} \pi$ & $\pi$ & $-1$ & $1$ \\
66 & $ \left(-\frac{2 \, \cos\left(u\right)}{\mathit{\varepsilon} - 1}, -\frac{2 \, {\left(v {\left(\sin\left(u\right) - 1\right)} - 1\right)}}{\mathit{\varepsilon} - 1}, -\frac{\mathit{\varepsilon}}{\mathit{\varepsilon} - 1}\right) $ & $0$ & $\pi$ & $-1$ & $1$ \\
67 & $ \left(-\frac{{\left(\mathit{\varepsilon} + 1\right)} \cos\left(u\right) \cos\left(v\right) - 2}{\mathit{\varepsilon} - 1}, -\frac{{\left(\mathit{\varepsilon} + 1\right)} \cos\left(u\right) \sin\left(v\right) - 4}{\mathit{\varepsilon} - 1}, -\frac{{\left(\mathit{\varepsilon} + 1\right)} \sin\left(u\right) - 1}{\mathit{\varepsilon} - 1}\right) $ & $0$ & $\frac{1}{2} \pi$ & $\pi$ & $2 \pi$ \\
68 & $ \left(-\frac{{\left(\mathit{\varepsilon} + 1\right)} \cos\left(u\right) - 2}{\mathit{\varepsilon} - 1}, -\frac{v}{\mathit{\varepsilon} - 1}, -\frac{{\left(\mathit{\varepsilon} + 1\right)} \sin\left(u\right) - 1}{\mathit{\varepsilon} - 1}\right) $ & $\frac{1}{2} \pi$ & $\pi$ & $-4$ & $0$ \\
69 & $ \left(-\frac{{\left(\mathit{\varepsilon} + 1\right)} \cos\left(u\right) - 2}{\mathit{\varepsilon} - 1}, -\frac{v}{\mathit{\varepsilon} - 1}, -\frac{{\left(\mathit{\varepsilon} + 1\right)} \sin\left(u\right) - 1}{\mathit{\varepsilon} - 1}\right) $ & $\frac{1}{2} \pi$ & $\pi$ & $0$ & $4$ \\
70 & $ \left(-\frac{{\left({\left(\mathit{\varepsilon} + 1\right)} \sin\left(u\right) + 2\right)} \cos\left(v\right)}{\mathit{\varepsilon} - 1}, -\frac{{\left({\left(\mathit{\varepsilon} + 1\right)} \sin\left(u\right) + 2\right)} \sin\left(v\right) + 4}{\mathit{\varepsilon} - 1}, -\frac{{\left(\mathit{\varepsilon} + 1\right)} \cos\left(u\right) - 1}{\mathit{\varepsilon} - 1}\right) $ & $0$ & $\frac{1}{2} \pi$ & $0$ & $\pi$ \\
71 & $ \left(-\frac{{\left(\mathit{\varepsilon} + 1\right)} \cos\left(u\right) + 2}{\mathit{\varepsilon} - 1}, -\frac{v}{\mathit{\varepsilon} - 1}, -\frac{{\left(\mathit{\varepsilon} + 1\right)} \sin\left(u\right) - 1}{\mathit{\varepsilon} - 1}\right) $ & $0$ & $\frac{1}{2} \pi$ & $-4$ & $0$ \\
72 & $ \left(-\frac{{\left(\mathit{\varepsilon} + 1\right)} \cos\left(u\right) + 2}{\mathit{\varepsilon} - 1}, -\frac{v}{\mathit{\varepsilon} - 1}, -\frac{{\left(\mathit{\varepsilon} + 1\right)} \sin\left(u\right) - 1}{\mathit{\varepsilon} - 1}\right) $ & $0$ & $\frac{1}{2} \pi$ & $0$ & $4$ \\
73 & $ \left(-\frac{{\left(\mathit{\varepsilon} + 1\right)} \cos\left(u\right) \cos\left(v\right) + 2}{\mathit{\varepsilon} - 1}, -\frac{{\left(\mathit{\varepsilon} + 1\right)} \cos\left(u\right) \sin\left(v\right) - 4}{\mathit{\varepsilon} - 1}, -\frac{{\left(\mathit{\varepsilon} + 1\right)} \sin\left(u\right) - 1}{\mathit{\varepsilon} - 1}\right) $ & $0$ & $\frac{1}{2} \pi$ & $\pi$ & $2 \pi$ \\
74 & $ \left(-\frac{\mathit{\varepsilon} + 3}{\mathit{\varepsilon} - 1}, -\frac{u}{\mathit{\varepsilon} - 1}, -\frac{v}{\mathit{\varepsilon} - 1}\right) $ & $0$ & $4$ & $-3$ & $-1$ \\
75 & $ \left(-\frac{\mathit{\varepsilon} + 3}{\mathit{\varepsilon} - 1}, -\frac{u}{\mathit{\varepsilon} - 1}, -\frac{v}{\mathit{\varepsilon} - 1}\right) $ & $-4$ & $0$ & $-3$ & $-1$ \\
76 & $ \left(-\frac{{\left(\mathit{\varepsilon} + 1\right)} \cos\left(u\right) + 2}{\mathit{\varepsilon} - 1}, -\frac{{\left(\mathit{\varepsilon} + 1\right)} \sin\left(u\right) - 4}{\mathit{\varepsilon} - 1}, -\frac{v}{\mathit{\varepsilon} - 1}\right) $ & $\pi$ & $2 \pi$ & $-3$ & $-1$ \\
77 & $ \left(-\frac{{\left(\mathit{\varepsilon} + 1\right)} \cos\left(u\right) - 2}{\mathit{\varepsilon} - 1}, -\frac{{\left(\mathit{\varepsilon} + 1\right)} \sin\left(u\right) - 4}{\mathit{\varepsilon} - 1}, -\frac{v}{\mathit{\varepsilon} - 1}\right) $ & $\pi$ & $2 \pi$ & $-3$ & $-1$ \\
78 & $ \left(\frac{\mathit{\varepsilon} + 3}{\mathit{\varepsilon} - 1}, -\frac{u}{\mathit{\varepsilon} - 1}, -\frac{v}{\mathit{\varepsilon} - 1}\right) $ & $0$ & $4$ & $-3$ & $-1$ \\
79 & $ \left(\frac{\mathit{\varepsilon} + 3}{\mathit{\varepsilon} - 1}, -\frac{u}{\mathit{\varepsilon} - 1}, -\frac{v}{\mathit{\varepsilon} - 1}\right) $ & $-4$ & $0$ & $-3$ & $-1$ \\
80 & $ \left(-\frac{{\left(\mathit{\varepsilon} + 3\right)} \cos\left(u\right)}{\mathit{\varepsilon} - 1}, -\frac{{\left(\mathit{\varepsilon} + 3\right)} \sin\left(u\right) + 4}{\mathit{\varepsilon} - 1}, -\frac{v}{\mathit{\varepsilon} - 1}\right) $ & $0$ & $\pi$ & $-3$ & $-1$ \\
81 & $ \left(-\frac{{\left(\mathit{\varepsilon} + 1\right)} \cos\left(u\right) - 2}{\mathit{\varepsilon} - 1}, -\frac{v}{\mathit{\varepsilon} - 1}, -\frac{{\left(\mathit{\varepsilon} + 1\right)} \sin\left(u\right) - 3}{\mathit{\varepsilon} - 1}\right) $ & $\frac{3}{2} \pi$ & $2 \pi$ & $-4$ & $0$ \\
82 & $ \left(-\frac{{\left(\mathit{\varepsilon} + 1\right)} \cos\left(u\right) \cos\left(v\right) - 2}{\mathit{\varepsilon} - 1}, -\frac{{\left(\mathit{\varepsilon} + 1\right)} \cos\left(u\right) \sin\left(v\right) - 4}{\mathit{\varepsilon} - 1}, -\frac{{\left(\mathit{\varepsilon} + 1\right)} \sin\left(u\right) - 3}{\mathit{\varepsilon} - 1}\right) $ & $\frac{3}{2} \pi$ & $2 \pi$ & $\pi$ & $2 \pi$ \\
83 & $ \left(-\frac{{\left(\mathit{\varepsilon} + 1\right)} \cos\left(u\right) - 2}{\mathit{\varepsilon} - 1}, -\frac{v}{\mathit{\varepsilon} - 1}, -\frac{{\left(\mathit{\varepsilon} + 1\right)} \sin\left(u\right) - 3}{\mathit{\varepsilon} - 1}\right) $ & $\pi$ & $\frac{3}{2} \pi$ & $0$ & $4$ \\
84 & $ \left(-\frac{{\left(\mathit{\varepsilon} + 1\right)} \cos\left(u\right) - 2}{\mathit{\varepsilon} - 1}, -\frac{v}{\mathit{\varepsilon} - 1}, -\frac{{\left(\mathit{\varepsilon} + 1\right)} \sin\left(u\right) - 3}{\mathit{\varepsilon} - 1}\right) $ & $\pi$ & $\frac{3}{2} \pi$ & $-4$ & $0$ \\
85 & $ \left(-\frac{{\left({\left(\mathit{\varepsilon} + 1\right)} \sin\left(u\right) + 2\right)} \cos\left(v\right)}{\mathit{\varepsilon} - 1}, -\frac{{\left({\left(\mathit{\varepsilon} + 1\right)} \sin\left(u\right) + 2\right)} \sin\left(v\right) + 4}{\mathit{\varepsilon} - 1}, -\frac{{\left(\mathit{\varepsilon} + 1\right)} \cos\left(u\right) - 3}{\mathit{\varepsilon} - 1}\right) $ & $\frac{1}{2} \pi$ & $\pi$ & $0$ & $\pi$ \\
86 & $ \left(-\frac{{\left(\mathit{\varepsilon} + 1\right)} \cos\left(u\right) + 2}{\mathit{\varepsilon} - 1}, -\frac{v}{\mathit{\varepsilon} - 1}, -\frac{{\left(\mathit{\varepsilon} + 1\right)} \sin\left(u\right) - 3}{\mathit{\varepsilon} - 1}\right) $ & $\frac{3}{2} \pi$ & $2 \pi$ & $0$ & $4$ \\
87 & $ \left(-\frac{{\left(\mathit{\varepsilon} + 1\right)} \cos\left(u\right) + 2}{\mathit{\varepsilon} - 1}, -\frac{v}{\mathit{\varepsilon} - 1}, -\frac{{\left(\mathit{\varepsilon} + 1\right)} \sin\left(u\right) - 3}{\mathit{\varepsilon} - 1}\right) $ & $\frac{3}{2} \pi$ & $2 \pi$ & $-4$ & $0$ \\
88 & $ \left(-\frac{{\left(\mathit{\varepsilon} + 1\right)} \cos\left(u\right) \cos\left(v\right) + 2}{\mathit{\varepsilon} - 1}, -\frac{{\left(\mathit{\varepsilon} + 1\right)} \cos\left(u\right) \sin\left(v\right) - 4}{\mathit{\varepsilon} - 1}, -\frac{{\left(\mathit{\varepsilon} + 1\right)} \sin\left(u\right) - 3}{\mathit{\varepsilon} - 1}\right) $ & $\frac{3}{2} \pi$ & $2 \pi$ & $\pi$ & $2 \pi$ \\
89 & $ \left(-\frac{{\left(\mathit{\varepsilon} + 1\right)} \cos\left(u\right) + 2}{\mathit{\varepsilon} - 1}, -\frac{v}{\mathit{\varepsilon} - 1}, -\frac{{\left(\mathit{\varepsilon} + 1\right)} \sin\left(u\right) - 3}{\mathit{\varepsilon} - 1}\right) $ & $\pi$ & $\frac{3}{2} \pi$ & $-4$ & $0$ \\
90 & $ \left(-\frac{2 \, \cos\left(u\right)}{\mathit{\varepsilon} - 1}, -\frac{v + 2 \, \sin\left(u\right)}{\mathit{\varepsilon} - 1}, \frac{\mathit{\varepsilon} + 4}{\mathit{\varepsilon} - 1}\right) $ & $0$ & $\pi$ & $0$ & $4$ \\
91 & $ \left(\frac{{\left({\left(\mathit{\varepsilon} - 1\right)} \sin\left(v\right) - 2\right)} \cos\left(u\right)}{\mathit{\varepsilon} - 1}, \frac{{\left({\left(\mathit{\varepsilon} - 1\right)} \sin\left(v\right) - 2\right)} \sin\left(u\right)}{\mathit{\varepsilon} - 1}, \frac{{\left(\mathit{\varepsilon} - 1\right)} \cos\left(v\right) - 1}{\mathit{\varepsilon} - 1}\right) $ & $\frac{2}{3} \pi$ & $\pi$ & $\pi$ & $\frac{3}{2} \pi$ \\
92 & $ \left(\frac{{\left({\left(\mathit{\varepsilon} - 1\right)} \sin\left(v\right) - 2\right)} \cos\left(u\right)}{\mathit{\varepsilon} - 1}, \frac{{\left({\left(\mathit{\varepsilon} - 1\right)} \sin\left(v\right) - 2\right)} \sin\left(u\right)}{\mathit{\varepsilon} - 1}, \frac{{\left(\mathit{\varepsilon} - 1\right)} \cos\left(v\right) - 1}{\mathit{\varepsilon} - 1}\right) $ & $\pi$ & $2 \pi$ & $\pi$ & $\frac{3}{2} \pi$ \\
93 & $ \left(\frac{{\left({\left(\mathit{\varepsilon} - 1\right)} \sin\left(v\right) - 2\right)} \cos\left(u\right)}{\mathit{\varepsilon} - 1}, \frac{{\left({\left(\mathit{\varepsilon} - 1\right)} \sin\left(v\right) - 2\right)} \sin\left(u\right)}{\mathit{\varepsilon} - 1}, \frac{{\left(\mathit{\varepsilon} - 1\right)} \cos\left(v\right) - 1}{\mathit{\varepsilon} - 1}\right) $ & $0$ & $\frac{1}{3} \pi$ & $\pi$ & $\frac{3}{2} \pi$ \\
94 & $ \left(-\frac{{\left(\mathit{\varepsilon} + 1\right)} \cos\left(v\right)}{\mathit{\varepsilon} - 1}, -\frac{{\left({\left(\mathit{\varepsilon} + 1\right)} \sin\left(v\right) + 2\right)} \cos\left(u\right) + 2}{\mathit{\varepsilon} - 1}, -\frac{{\left({\left(\mathit{\varepsilon} + 1\right)} \sin\left(v\right) + 2\right)} \sin\left(u\right) + 1}{\mathit{\varepsilon} - 1}\right) $ & $0$ & $\pi$ & $-\frac{1}{3} \pi$ & $0$ \\
95 & $ \left(-\frac{{\left(\mathit{\varepsilon} + 1\right)} \cos\left(v\right)}{\mathit{\varepsilon} - 1}, -\frac{{\left({\left(\mathit{\varepsilon} + 1\right)} \sin\left(v\right) + 2\right)} \cos\left(u\right) + 2}{\mathit{\varepsilon} - 1}, -\frac{{\left({\left(\mathit{\varepsilon} + 1\right)} \sin\left(v\right) + 2\right)} \sin\left(u\right) + 1}{\mathit{\varepsilon} - 1}\right) $ & $0$ & $\pi$ & $0$ & $\pi$ \\
96 & $ \left(-\frac{{\left(\mathit{\varepsilon} + 1\right)} \cos\left(v\right)}{\mathit{\varepsilon} - 1}, -\frac{{\left({\left(\mathit{\varepsilon} + 1\right)} \sin\left(v\right) + 2\right)} \cos\left(u\right) + 2}{\mathit{\varepsilon} - 1}, -\frac{{\left({\left(\mathit{\varepsilon} + 1\right)} \sin\left(v\right) + 2\right)} \sin\left(u\right) + 1}{\mathit{\varepsilon} - 1}\right) $ & $0$ & $\pi$ & $\pi$ & $\frac{4}{3} \pi$ \\
97 & $ \left(\frac{{\left({\left(\mathit{\varepsilon} - 1\right)} \sin\left(v\right) - 2\right)} \cos\left(u\right)}{\mathit{\varepsilon} - 1}, \frac{{\left({\left(\mathit{\varepsilon} - 1\right)} \sin\left(v\right) - 2\right)} \sin\left(u\right) - 4}{\mathit{\varepsilon} - 1}, \frac{{\left(\mathit{\varepsilon} - 1\right)} \cos\left(v\right) - 1}{\mathit{\varepsilon} - 1}\right) $ & $-\frac{1}{3} \pi$ & $0$ & $\pi$ & $\frac{3}{2} \pi$ \\
98 & $ \left(\frac{{\left({\left(\mathit{\varepsilon} - 1\right)} \sin\left(v\right) - 2\right)} \cos\left(u\right)}{\mathit{\varepsilon} - 1}, \frac{{\left({\left(\mathit{\varepsilon} - 1\right)} \sin\left(v\right) - 2\right)} \sin\left(u\right) - 4}{\mathit{\varepsilon} - 1}, \frac{{\left(\mathit{\varepsilon} - 1\right)} \cos\left(v\right) - 1}{\mathit{\varepsilon} - 1}\right) $ & $0$ & $\pi$ & $\pi$ & $\frac{3}{2} \pi$ \\
99 & $ \left(\frac{{\left({\left(\mathit{\varepsilon} - 1\right)} \sin\left(v\right) - 2\right)} \cos\left(u\right)}{\mathit{\varepsilon} - 1}, \frac{{\left({\left(\mathit{\varepsilon} - 1\right)} \sin\left(v\right) - 2\right)} \sin\left(u\right) - 4}{\mathit{\varepsilon} - 1}, \frac{{\left(\mathit{\varepsilon} - 1\right)} \cos\left(v\right) - 1}{\mathit{\varepsilon} - 1}\right) $ & $\pi$ & $\frac{4}{3} \pi$ & $\pi$ & $\frac{3}{2} \pi$ \\
100 & $ \left(\frac{{\left(\mathit{\varepsilon} - 1\right)} \cos\left(v\right) - 1}{\mathit{\varepsilon} - 1}, \frac{{\left({\left(\mathit{\varepsilon} - 1\right)} \sin\left(v\right) - r\right)} \cos\left(u\right) - 2}{\mathit{\varepsilon} - 1}, \frac{{\left({\left(\mathit{\varepsilon} - 1\right)} \sin\left(v\right) - r\right)} \sin\left(u\right) - 1}{\mathit{\varepsilon} - 1}\right) $ & $0$ & $\pi$ & $\frac{2}{3} \pi$ & $\pi$ \\
101 & $ \left(\frac{{\left(\mathit{\varepsilon} - 1\right)} \cos\left(v\right) - 1}{\mathit{\varepsilon} - 1}, \frac{u - 2}{\mathit{\varepsilon} - 1}, -\frac{{\left(\mathit{\varepsilon} - 1\right)} \sin\left(v\right) + 1}{\mathit{\varepsilon} - 1}\right) $ & $-r$ & $r$ & $\frac{1}{2} \pi$ & $\pi$ \\
102 & $ \left(\frac{{\left(\mathit{\varepsilon} - 1\right)} \cos\left(u\right) \cos\left(v\right) - 1}{\mathit{\varepsilon} - 1}, -\frac{{\left(\mathit{\varepsilon} - 1\right)} \cos\left(u\right) \sin\left(v\right) - \sqrt{3} + 4}{\mathit{\varepsilon} - 1}, -\frac{{\left(\mathit{\varepsilon} - 1\right)} \sin\left(u\right) + 1}{\mathit{\varepsilon} - 1}\right) $ & $0$ & $\frac{1}{2} \pi$ & $\pi$ & $\frac{4}{3} \pi$ \\
103 & $ \left(\frac{{\left(\mathit{\varepsilon} - 1\right)} \cos\left(u\right) \cos\left(v\right) - 1}{\mathit{\varepsilon} - 1}, -\frac{{\left(\mathit{\varepsilon} - 1\right)} \cos\left(u\right) \sin\left(v\right) + \sqrt{3}}{\mathit{\varepsilon} - 1}, -\frac{{\left(\mathit{\varepsilon} - 1\right)} \sin\left(u\right) + 1}{\mathit{\varepsilon} - 1}\right) $ & $0$ & $\frac{1}{2} \pi$ & $\frac{2}{3} \pi$ & $\pi$ \\
104 & $ \left(-\frac{{\left(\mathit{\varepsilon} - 1\right)} \cos\left(u\right) \cos\left(v\right) - 1}{\mathit{\varepsilon} - 1}, -\frac{{\left(\mathit{\varepsilon} - 1\right)} \cos\left(u\right) \sin\left(v\right) + \sqrt{3}}{\mathit{\varepsilon} - 1}, -\frac{{\left(\mathit{\varepsilon} - 1\right)} \sin\left(u\right) + 1}{\mathit{\varepsilon} - 1}\right) $ & $0$ & $\frac{1}{2} \pi$ & $\frac{2}{3} \pi$ & $\pi$ \\
105 & $ \left(-\frac{{\left(\mathit{\varepsilon} - 1\right)} \cos\left(v\right) - 1}{\mathit{\varepsilon} - 1}, \frac{u - 2}{\mathit{\varepsilon} - 1}, -\frac{{\left(\mathit{\varepsilon} - 1\right)} \sin\left(v\right) + 1}{\mathit{\varepsilon} - 1}\right) $ & $-r$ & $r$ & $\frac{1}{2} \pi$ & $\pi$ \\
106 & $ \left(-\frac{{\left(\mathit{\varepsilon} - 1\right)} \cos\left(u\right) \cos\left(v\right) - 1}{\mathit{\varepsilon} - 1}, -\frac{{\left(\mathit{\varepsilon} - 1\right)} \cos\left(u\right) \sin\left(v\right) - \sqrt{3} + 4}{\mathit{\varepsilon} - 1}, -\frac{{\left(\mathit{\varepsilon} - 1\right)} \sin\left(u\right) + 1}{\mathit{\varepsilon} - 1}\right) $ & $0$ & $\frac{1}{2} \pi$ & $\pi$ & $\frac{4}{3} \pi$ \\
107 & $ \left(-\frac{{\left(\mathit{\varepsilon} - 1\right)} \cos\left(v\right) - 1}{\mathit{\varepsilon} - 1}, -\frac{{\left({\left(\mathit{\varepsilon} - 1\right)} \sin\left(v\right) - r\right)} \cos\left(u\right) + 2}{\mathit{\varepsilon} - 1}, -\frac{{\left({\left(\mathit{\varepsilon} - 1\right)} \sin\left(v\right) - r\right)} \sin\left(u\right) + 1}{\mathit{\varepsilon} - 1}\right) $ & $\pi$ & $2 \pi$ & $\frac{2}{3} \pi$ & $\pi$ \\
108 & $ \left(\frac{\mathit{\varepsilon}}{\mathit{\varepsilon} - 1}, -\frac{r \cos\left(v\right) + 2}{\mathit{\varepsilon} - 1}, -\frac{r u \sin\left(v\right) + 1}{\mathit{\varepsilon} - 1}\right) $ & $0$ & $1$ & $0$ & $\pi$ \\
109 & $ \left(-\frac{\mathit{\varepsilon}}{\mathit{\varepsilon} - 1}, -\frac{r \cos\left(v\right) + 2}{\mathit{\varepsilon} - 1}, -\frac{r u \sin\left(v\right) + 1}{\mathit{\varepsilon} - 1}\right) $ & $0$ & $1$ & $0$ & $\pi$ \\
\bottomrule
\end{longtable}}

\printbibliography

@article {pestovionin,
    AUTHOR = {Pestov, G. and Ionin, V.},
     TITLE = {On the largest possible circle imbedded in a given closed
              curve},
   JOURNAL = {Dokl. Akad. Nauk SSSR},
  FJOURNAL = {Doklady Akademii Nauk SSSR},
    VOLUME = {127},
      YEAR = {1959},
     PAGES = {1170--1172},
      ISSN = {0002-3264},
   MRCLASS = {52.00},
  MRNUMBER = {107214},
MRREVIEWER = {B.\ Gr\"unbaum},
}

@article {lagunovinitialpapergenerallowerbound,
    AUTHOR = {Lagunov, V. N.},
     TITLE = {On the largest possible sphere imbedded in a given closed
              surface},
   JOURNAL = {Dokl. Akad. Nauk SSSR},
  FJOURNAL = {Doklady Akademii Nauk SSSR},
    VOLUME = {127},
      YEAR = {1959},
     PAGES = {1167--1169},
      ISSN = {0002-3264},
   MRCLASS = {52.00},
  MRNUMBER = {107215},
MRREVIEWER = {B.\ Gr\"unbaum},
}

@article {lagunovgenerallowerbound,
    AUTHOR = {Lagunov, V. N.},
     TITLE = {On the largest sphere contained in a closed surface},
   JOURNAL = {Sibirsk. Mat. \v Z.},
  FJOURNAL = {Akademija Nauk SSSR. Sibirskoe Otdelenie. Sibirski\u i\
              Matemati\v ceski\u i\ \v Zurnal},
    VOLUME = {1},
      YEAR = {1960},
     PAGES = {205--232},
      ISSN = {0037-4474},
   MRCLASS = {53.75},
  MRNUMBER = {139125},
}

@article {lagunovgeneralupperbound,
    AUTHOR = {Lagunov, V. N.},
     TITLE = {On the largest sphere contained in a closed surface. {II}},
   JOURNAL = {Sibirsk. Mat. \v Z.},
  FJOURNAL = {Akademija Nauk SSSR. Sibirskoe Otdelenie. Sibirski\u i\
              Matemati\v ceski\u i\ \v Zurnal},
    VOLUME = {2},
      YEAR = {1961},
     PAGES = {874--883},
      ISSN = {0037-4474},
   MRCLASS = {53.75},
  MRNUMBER = {139126},
MRREVIEWER = {B.\ Gr\"unbaum},
}

@article {lagunovfetgenuslowerbound,
    AUTHOR = {Lagunov, V. N. and Fet, A. I.},
     TITLE = {Extremal problems for surfaces of prescribed topological type
              {$(1)$}},
   JOURNAL = {Sibirsk. Mat. \v Z.},
  FJOURNAL = {Akademija Nauk SSSR. Sibirskoe Otdelenie. Sibirski\u i\
              Matemati\v ceski\u i\ \v Zurnal},
    VOLUME = {4},
      YEAR = {1963},
     PAGES = {145--176},
      ISSN = {0037-4474},
   MRCLASS = {53.75},
  MRNUMBER = {152947},
MRREVIEWER = {S.\ Marde\v si\'c},
}

@article {lagunovfetgenusupperbound,
    AUTHOR = {Lagunov, V. N. and Fet, A. I.},
     TITLE = {Extremal problems for a surface of given topological type.
              {II}},
   JOURNAL = {Sibirsk. Mat. \v Z.},
  FJOURNAL = {Akademija Nauk SSSR. Sibirskoe Otdelenie. Sibirski\u i\
              Matemati\v ceski\u i\ \v Zurnal},
    VOLUME = {6},
      YEAR = {1965},
     PAGES = {1026--1036},
      ISSN = {0037-4474},
   MRCLASS = {53.75},
  MRNUMBER = {198397},
MRREVIEWER = {A.\ V.\ Pogorelov},
}

@misc{lagunov2019extremalproblemssurfacesprescribed,
      title={Extremal problems for surfaces of prescribed topological type (1)}, 
      author={V. N. Lagunov and A. I. Fet},
      year={2019},
      eprint={1903.01224},
      archivePrefix={arXiv},
      primaryClass={math.DG},
      url={https://arxiv.org/abs/1903.01224}, 
}

@misc{qiu2025curvatureboundedsphereproblem,
      title={On the curvature bounded sphere problem in $\mathbb{R}^3$}, 
      author={Hongda Qiu},
      year={2025},
      eprint={2507.06245},
      archivePrefix={arXiv},
      primaryClass={math.DG},
      url={https://arxiv.org/abs/2507.06245}, 
}

@misc{qiu2025supersqueezedsphere,
      title={Supersqueezed sphere}, 
      author={Hongda Qiu},
      year={2025},
      eprint={2511.12598},
      archivePrefix={arXiv},
      primaryClass={math.DG},
      url={https://arxiv.org/abs/2511.12598}, 
}

@misc{petrunin2024differentialgeometrycurvessurfaces,
      title={What is differential geometry: curves and surfaces}, 
      author={Anton Petrunin and Sergio Zamora Barrera},
      year={2024},
      eprint={2012.11814},
      archivePrefix={arXiv},
      primaryClass={math.HO},
      url={https://arxiv.org/abs/2012.11814}, 
}

@MISC {petruninsMOQuestion,
    TITLE = {Sphere with bounded curvature},
    AUTHOR = {Anton Petrunin (https://mathoverflow.net/users/1441/anton-petrunin)},
    HOWPUBLISHED = {MathOverflow},
    NOTE = {URL:https://mathoverflow.net/q/425509 (version: 2025-11-27)},
    EPRINT = {https://mathoverflow.net/q/425509},
    URL = {https://mathoverflow.net/q/425509}
}

@MISC {bolansMOAnswer,
    TITLE = {Sphere with bounded curvature},
    AUTHOR = {Matthew Bolan (https://mathoverflow.net/users/177778/matthew-bolan)},
    HOWPUBLISHED = {MathOverflow},
    NOTE = {URL:https://mathoverflow.net/q/502043 (version: 2025-10-26)},
    EPRINT = {https://mathoverflow.net/q/502043},
    URL = {https://mathoverflow.net/q/502043}
}

@MISC {taosMOAnswer,
    TITLE = {Sphere with bounded curvature},
    AUTHOR = {Terry Tao (https://mathoverflow.net/users/766/terry-tao)},
    HOWPUBLISHED = {MathOverflow},
    NOTE = {URL:https://mathoverflow.net/q/501440 (version: 2025-10-15)},
    EPRINT = {https://mathoverflow.net/q/501440},
    URL = {https://mathoverflow.net/q/501440}
}

@MISC {qiusMOAnswer,
    TITLE = {Sphere with bounded curvature},
    AUTHOR = {Hongda Qiu (https://mathoverflow.net/users/559371/hongda-qiu)},
    HOWPUBLISHED = {MathOverflow},
    NOTE = {URL:https://mathoverflow.net/q/502039 (version: 2025-11-29)},
    EPRINT = {https://mathoverflow.net/q/502039},
    URL = {https://mathoverflow.net/q/502039}
}

\end{document}